\PassOptionsToPackage{monochrome}{xcolor}
\documentclass[sn-mathphys,Numbered]{sn-jnl}

\usepackage{graphicx}%
\usepackage{multirow}%
\usepackage{amsmath,amssymb,amsfonts}%
\usepackage{amsthm}%
\usepackage{mathrsfs}%
\usepackage[title]{appendix}%
\usepackage{xcolor}%
\usepackage{textcomp}%
\usepackage{manyfoot}%
\usepackage{booktabs}%
\usepackage{bookmark}
\usepackage{algorithm}%
\usepackage{algorithmicx}%
\usepackage{algpseudocode}%
\usepackage{listings}%
\usepackage{color,latexsym}
\usepackage{amsbsy}
\usepackage{amsmath}
\usepackage{bbm}
\usepackage{colortbl,dcolumn}
\usepackage{enumerate}
\usepackage{indentfirst}
\usepackage[utf8]{inputenc}
\usepackage{float}
\usepackage{subfigure}
\usepackage{psfrag}
\usepackage{verbatim}
\usepackage{placeins}

\def\R{\mathbb{R}}
\def\E{\mathbb{E}}
\def\N{\mathbb{N}}
\def \e{\text{e}}
\def \half{\tfrac12}
\def \d{{\rm d}}


\newtheorem{assumption}{Assumption}[section]
\newtheorem{remark}[assumption]{Remark}
\newtheorem{lemma}[assumption]{Lemma}
\newtheorem{theorem}[assumption]{Theorem}   
\newtheorem{proposition}[assumption]{Proposition}
\newtheorem{definition}[assumption]{Definition}
\newtheorem{example}[assumption]{Example}

\numberwithin{equation}{section}



\raggedbottom

\begin{document}

\title{An unconditional boundary and dynamics preserving scheme for the stochastic epidemic model 
}

\author[1]{\fnm{Ruishu} \sur{Liu}}\email{chicago@mail.ustc.edu.cn}
\equalcont{These authors contributed equally to this work.}

\author[1]{\fnm{Xiaojie} \sur{Wang}}\email{x.j.wang7@csu.edu.cn}
\equalcont{Three authors contributed equally to this work.}

\author*[1]{\fnm{Lei} \sur{Dai}}\email{dailei@csu.edu.cn}
\equalcont{These authors contributed equally to this work.}

\affil[1]{\orgdiv{School of Mathematics and Statistics}, \orgname{Central South University}, \orgaddress{ \city{Changsha}, \postcode{410000}, \state{HNP-LAMA}, \country{China}}}


\abstract{
    In the present article, we construct a logarithm transformation based Milstein-type method for the stochastic susceptible-infected-susceptible (SIS) epidemic model evolving in the  domain $(0,N)$.
    The new scheme is explicit and unconditionally boundary and dynamics preserving,
    when used to solve the stochastic SIS epidemic model.
    Also, it is proved that the scheme has a strong convergence rate of order one. 
    {\color{black}
    Different from existing time discretization schemes, the newly proposed scheme for any time step size $h>0$, not only produces numerical approximations living in the entire domain $(0, N)$, but also unconditionally reproduces the extinction and persistence behavior of the original model, with no additional requirements imposed on the model parameters.
    }
    Numerical experiments are presented to verify our theoretical findings.
}

\keywords{Stochastic SIS epidemic model, Boundary preserving, Extinction, Persistence, Explicit scheme, Order 1 strong convergence}



\maketitle


\section{Introduction}\label{2023SIS-section:Introduction}
Over the past few decades, mathematical models have been constructed to describe the evolution of different kinds of epidemics. 
The classical Kermack-McKendrick model \cite{1927A} was used for modelling common childhood diseases, where a typical individual starts off susceptible, at some stage catches the disease, and after a short infectious period becomes permanently immune. 
Nevertheless, {\color{black}in terms of many unstable sources of infection, particularly most viruses,} cured people can rarely get long-time immunity and {\color{green}turn} to be susceptible to some mutations of the same original virus.
For these cases, the susceptible-infected-susceptible (SIS) model was constructed in \cite{2010Hethcote}. 
{\color{black}By adding randomness to the model, 
the classical susceptible-infected-susceptible epidemic model was extended by Gray et al. in \cite{gray2011stochastic} from a deterministic framework to a stochastic one}, which is formulated as the following stochastic differential equations (SDEs) of It\^{o} type:
\begin{align}
    \d S_t 
    & = 
    \left[ \mu N - \beta S_t I_t + \gamma I_t - \mu S_t \right] \, \d t 
    - 
    \sigma S_t I_t  \, \d W_t \, ,
    \label{2023SIS-eq:SIS_original-system1}\\
    \d I_t
    & =
    \left[ \beta S_t I_t - (\mu + \gamma)I_t \right]\, \d t
    +
    \sigma S_t I_t \, \d W_t \, ,
    \label{2023SIS-eq:SIS_original-system2}
\end{align}
where $S_t$ denotes the number of susceptibles and $I_t$ the number of infecteds at time $t \in [0,T]$, with initial values satisfying 
$S_0 + I_0 = N$.
{\color{black}
In this model, the total population $N$ is divided into susceptibles $S_t$ and infecteds $I_t$, with the assumption that recovered people become susceptible again. 
Here and below, the model parameters $\mu$, $\beta$, $\gamma$, $\sigma > 0$ are 
assumed to be positive constants.
More accurately, $\mu$ stands for the per capita death rate, $\beta$ is the disease transmission coefficient, $\gamma$ means the cure rate 
and $\sigma$ represents the variance of the occurrence of potentially infectious contacts between infecteds and susceptibles. 
}
In addition, $\{ W_t \}_{t \in [0, T]}$, $T>0$ is a standard Brownian motion defined on a complete probability space $(\Omega, \mathcal{F}, \mathbb{P})$.
{\color{black}
Since ${\rm d}\, [ S_t + I_t ] = [\mu N - \mu (S_t + I_t)] \,{\rm d} t$, it can be directly deduced that $S_t + I_t = N$.
Thus the original SDE system \eqref{2023SIS-eq:SIS_original-system1}-\eqref{2023SIS-eq:SIS_original-system2} can be reduced to a scalar SDE for $I_t$:
\begin{equation}\label{2023SIS-eq:SIS_original}
    {\rm d} I_t 
    = 
    I_t ( \beta N - \mu - \gamma - \beta I_t ) \, {\rm d} t 
    + 
    \sigma I_t ( N - I_t ) \, {\rm d} W_t , \quad t \in [0,T],
    \quad
    I_0 \in (0,N).
\end{equation}
}
It was asserted in \cite{gray2011stochastic} that SDE \eqref{2023SIS-eq:SIS_original} admits a unique global solution in $(0,N)$, whose dynamics behavior was also discussed. On the one hand, 
{\color{black}
the extinction property
}
of SDE \eqref{2023SIS-eq:SIS_original},
{\color{black}
which means that the 
disease will die out with probability one (see Definition \ref{2023SIS-def:extinction}),
}
was confirmed under the conditions that
\begin{itemize}
    \item $R_0^S < 1$,
    \item $\sigma^2 \leq \tfrac{\beta}{N}$ or $\sigma^2 > \tfrac{\beta}{N} \vee \tfrac{\beta^2}{2(\mu + \gamma)}$,
\end{itemize}
where 
    \begin{align}
    R_0^D : = \tfrac{\beta N}{\mu + \gamma}
    \quad
    \text{and}
    \quad
    R_0^S : =
    R_0^D - \tfrac{\sigma^2 N^2}{2(\mu + \gamma)}
    \end{align}
are the basic reproduction numbers for the deterministic and stochastic SIS epidemic models respectively.
{\color{black}
On the other hand, SDE \eqref{2023SIS-eq:SIS_original} was shown to possess the persistence property, that is, the 
disease will stay (see Definition \ref{2023SIS-def:persistence}),  
when $ R_0^S > 1$.
}
Since the analytic solution of SDE \eqref{2023SIS-eq:SIS_original} is not available, people resort to numerical solutions that are expected to be convergent 
and preserve as many properties of the original model as possible. We mention that the numerical scheme must be carefully designed to obtain good approximations.
%
{\color{black}
Indeed, the well-known Euler Maruyama (EM) scheme cannot preserve the domain of the original model and the super-linear drift and diffusion coefficients of \eqref{2023SIS-eq:SIS_original} make the resulting approximations divergent \cite{hutzenthaler2011strong}.
In the past few years, many researchers have
made a great of efforts to design implicit \cite{beyn2016stochastic,beyn2017stochastic,higham2002strong,wang2020mean,andersson2017mean}
and explicit \cite{hutzenthaler2012strong,sabanis2013note,wang2013tamed,mao2015truncated,tretyakov2013fundamental,hutzenthaler2020perturbation,brehier2020approximation,sabanis2019explicit,cai2022positivity,yi2021positivity} schemes for SDEs with super-linearly growing coefficients.
However, boundary and dynamics preserving schemes are much less studied.
}
In \cite{alfonsi2013strong,neuenkirch2014first}, the authors combined a Lamperti-type transformation with the backward Euler method to derive 
strongly convergent and domain-preserving schemes
for scalar SDEs. 
More recently, the authors of \cite{LEI2023114758} proposed a positivity-preserving logarithmic transformed truncated Euler-Maruyama method for scalar SDEs and proved its strong and weak convergence rates.
For the model \eqref{2023SIS-eq:SIS_original}, some authors \cite{chen2021first,yang2021first,yang2023strong,LIU2023107258} relied on the Lamperti-type transformation
and applied some explicit schemes to the transformed SDE that has an additive noise. 
Transforming back yields the desired numerical approximations, which preserves the domain and possesses a strong convergence rate of order one {\color{black}in \cite{chen2021first,yang2021first,yang2023strong} and order 1.5 in \cite{LIU2023107258}}. 
In \cite{yang2022numerical,yang2023numerical}, a truncated Wiener process was used to construct one-half order strong
schemes for the model \eqref{2023SIS-eq:SIS_original}, 
where restrictive assumptions were imposed on the step size $h$ to preserve the domain and dynamics behavior
of the original model.
{\color{black}
It is worthwhile to point out that the numerical approximations produced by the above domain-preserving schemes cover the entire domain $(0,N)$, except for that in \cite{yang2023strong}, where a truncation strategy leads to approximations living in a sub-set of $(0,N)$. 
Also, we mention that the dynamics behavior of the numerical approximations was investigated in 
\cite{yang2023strong,yang2022numerical,LIU2023107258,yang2023numerical}, 
but not in \cite{chen2021first,yang2021first}. 
More precisely, the proposed schemes in \cite{yang2023strong,yang2022numerical,LIU2023107258,yang2023numerical} were proven to reproduce extinction
under certain conditions, and the preservation of the persistence property was achieved in \cite{LIU2023107258,yang2022numerical,yang2023numerical}.
A comparison between numerical schemes 
{\color{red}concerning dynamics behavior}
with ours is made in Table \ref{table:comparison} presented below, to provide a more comprehensive overview of the properties of these domain-preserving schemes.

\begin{table}[htp] \label{table:comparison}
    \footnotesize
    \caption{ \centering{ \bf Comparison between numerical schemes for the stochastic SIS model}}
    \label{2023SIS-tab:intro_comparison}
    \centering
    \begin{tabular}
    { m{2cm}<{\centering} m{2cm}<{\centering} m{2cm}<{\centering} m{2.5cm}<{\centering} m{2cm}<{\centering} }
    \toprule 
    Paper  &  Strong convergence rates & Domain preservation  & Extinction preservation & Persistence preservation \\
    \midrule 
    \cite{yang2023strong}  & 1 & Unconditionally &  Assumptions on the step size & {\color{red}NA}\\
    \midrule 
    \cite{LIU2023107258} & 1.5 & Unconditionally & Assumptions on the model parameters & Unconditionally\\
    \midrule 
    \cite{yang2022numerical,yang2023numerical}& 0.5 & Assumptions on the step size  & Assumptions on the step size & Assumptions on the step size\\
    \midrule 
    This paper  & 1 & Unconditionally & Unconditionally & Unconditionally \\
    \bottomrule
\end{tabular}
\end{table}

}

An interesting question thus arises as to whether
one can construct any
boundary and dynamics preserving scheme for the stochastic epidemic model, without any restriction on the step size $h>0$.
This is desirable, particularly in the multi-level Monte Carlo (MLMC) setting \cite{giles2008multilevel}, 
where one is required to use many simulations with large discretization time step size $h>0$. 
It is therefore natural to look for more advanced numerical schemes that capture such properties unconditionally.
The present work 
aims to give a positive answer,
by introducing an explicit, strongly convergent scheme
that preserves both the {\color{green}dynamics behavior} and the boundaries of the analytic solutions for any given step size $h>0$. 
{\color{black}
To this end, for $M \in \N^+$ we construct a uniform mesh $\{t_k = k h\}_{k=0}^M$ over $[0,T]$ with a uniform step size $h = \tfrac{T}{M}$.
}
Our strategy is to combine a logarithm transformation with a corrected Milstein method.
More precisely, by a logarithm transformation $X_t = \log(I_t)$, we obtain the transformed SDE \eqref{2023SIS-eq:SIS_transformed},
{\color{black}
for which corrected Milstein approximations $\{Y_k\}_{k\geq 0}$, given by \eqref{2023SIS-eq:SIS_log_corrected_Mil_scheme}, are designed.
The correction used here guarantees that the approximations $\{Y_k\}_{k\geq 0}$ live in $(-\infty,\log N)$.
Transforming back by $\mathcal{I}_k := \e^{Y_k}$ gives the desired approximation of $I_{t_k}$, which is boundary preserving and strongly convergent with order one (see Theorem \ref{2023SIS-thm:convergence_transformed}, \ref{2023SIS-thm:convergence_oringinal}):
    \begin{equation}
        \E\Big[
            \sup_{k=0,...,M} 
            \big\vert I_{t_k} - \mathcal{I}_k \big\vert^p
        \Big]
        \leq 
        C h^p, \ p \geq 1.
    \end{equation}
}
Throughout this paper the notation $C$ might be slightly abused to denote a generic positive constant depending on $T$ but independent of $h$, and might vary from each time of appearance.
Recall that a Lamperti-type transformation $X_t = \log (\tfrac{ I_t}{N-I_t} )$ was used in
\cite{chen2021first,LIU2023107258,yang2021first,yang2023strong}  to design explicit boundary preserving schemes.
Different from \cite{chen2021first,LIU2023107258,yang2021first,yang2023strong}, we turn to a direct logarithm transformation 
$X_t = \log(I_t)$, which, combined with a corrected Milstein time-stepping scheme \eqref{2023SIS-eq:SIS_log_corrected_Mil_scheme},
helps
{\color{black}covering the entire domain $(0,N)$ and preserving} the dynamics behavior of original model for any step size $h>0$.
Under exactly the same conditions as required in \cite{gray2011stochastic}, the proposed scheme reproduces the extinction and persistence properties of the original model (see Theorem \ref{2023SIS-thm:extinction_scheme}, \ref{2023SIS-thm:persistence_scheme}).
This is also confirmed in the numerical experiments,
showing that the proposed scheme performs well in dynamics preserving, even with large step sizes.

The rest of this paper is organized as follows.
In the next section, the basic properties of the stochastic SIS epidemic model are stated and the numerical scheme is proposed. 
In Section \ref{2023SIS-section:Error_analysis}, the first order strong convergence is elaborated.
Section \ref{2023SIS-section:Dynamics_behavior} is devoted to the analysis of the dynamics behavior for the proposed scheme.
Numerical experiments are presented in Section \ref{2023SIS-section:numerical_experiments} to verify our theoretical findings.
A short conclusion is made finally in Section \ref{2023SIS-section:conclusion}.



\section{The stochastic epidemic model and the proposed scheme}\label{2023SIS-section:SIS_model_and_scheme}

The present section  revisits the stochastic epidemic model
\eqref{2023SIS-eq:SIS_original} and attempts to 
design a desirable time-stepping scheme.
For the well-posedness of \eqref{2023SIS-eq:SIS_original}, we recall the following lemma, which can be found in \cite[Theorem 3.1]{gray2011stochastic}.

\begin{lemma}\label{lem:SIS_solution_property}
For any initial value $I_0 \in (0,N)$, the SDE \eqref{2023SIS-eq:SIS_original} has a unique global positive solution $I_t\in (0,N)$ for all $t \geq 0$ with probability one, namely,
    \begin{equation}
        \mathbb{P} (I_t \in (0,N),\forall t\geq 0)=1.
    \end{equation} 
\end{lemma}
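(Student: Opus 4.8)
The plan is to follow the standard two-stage strategy for confining a one-dimensional diffusion to its natural domain: first construct a unique maximal local solution, then rule out explosion to the boundary by a Khasminskii--Lyapunov argument. Since the drift $b(x) = x(\beta N - \mu - \gamma - \beta x)$ and the diffusion coefficient $g(x) = \sigma x(N-x)$ of \eqref{2023SIS-eq:SIS_original} are polynomials, they are locally Lipschitz on $\R$, and the classical existence--uniqueness theorem for SDEs with locally Lipschitz coefficients yields a unique solution $I_t$ up to an exit/explosion time $\tau_e$. The whole difficulty is to show that, starting from $I_0 \in (0,N)$, the solution can hit neither $0$ nor $N$ in finite time, i.e. that $\tau_e = \infty$ and $I_t \in (0,N)$ for all $t$ with probability one.

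To this end I would introduce, for each sufficiently large integer $n$, the exit time
\[
\tau_n := \inf\big\{ t \geq 0 : I_t \notin (\tfrac1n,\, N - \tfrac1n) \big\},
\]
which is nondecreasing in $n$; set $\tau_\infty := \lim_{n\to\infty}\tau_n$. Up to $\tau_n$ the solution lives in a compact subinterval of $(0,N)$ on which the coefficients are genuinely Lipschitz, so it suffices to prove $\tau_\infty = \infty$ almost surely. The key device is a $C^2$ Lyapunov function $V:(0,N)\to[0,\infty)$ diverging at both endpoints; a natural candidate, compatible with the logarithmic transformations used later in the paper, is
\[
V(x) = -\log\!\Big(\tfrac{x}{N}\Big) - \log\!\Big(\tfrac{N-x}{N}\Big),
\]
for which $V(x)\to\infty$ as $x\to 0^+$ or $x\to N^-$, and $V'(x) = -\tfrac1x + \tfrac1{N-x}$, $V''(x) = \tfrac1{x^2} + \tfrac1{(N-x)^2}$.

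The central computation is then to apply It\^o's formula to $V(I_{t\wedge\tau_n})$ and to bound the generator
\[
(\mathcal{L}V)(x) = b(x)\,V'(x) + \tfrac12 g(x)^2\, V''(x)
\]
from above by an affine function of $V$, i.e. $\mathcal{L}V \leq C(1+V)$ with $C$ independent of $n$ and of $x\in(0,N)$. Granting this, taking expectations (the stochastic integral being a true martingale up to $\tau_n$) and invoking Gronwall's inequality gives $\E[V(I_{t\wedge\tau_n})] \leq (V(I_0)+Ct)\e^{Ct}$, uniformly in $n$. Since $V$ evaluated at the endpoints $\tfrac1n$ and $N-\tfrac1n$ grows like $\log n$, on the event $\{\tau_n \leq t\}$ one has $V(I_{\tau_n}) \geq c\log n$, so Chebyshev's inequality yields $\mathbb{P}(\tau_n \leq t) \leq (V(I_0)+Ct)\e^{Ct}/(c\log n) \to 0$. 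Hence $\tau_\infty = \infty$ a.s., and the solution remains in $(0,N)$ for all time, which is the assertion of the lemma.

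I expect the main obstacle to be the verification of the one-sided bound $\mathcal{L}V \leq C(1+V)$ near the two endpoints, where $V'$ and $V''$ blow up but the diffusion $g(x)=\sigma x(N-x)$ degenerates quadratically. The point is that these competing effects cancel: near $x=0$ one finds $\tfrac12 g^2 V'' \to \tfrac12\sigma^2 N^2$ and $b\,V' \to -(\beta N-\mu-\gamma)$, both bounded, while near $x=N$ the inward-pointing drift makes $b\,V'$ tend to $-\infty$, which only helps the upper bound. Tracking these cancellations carefully, and checking that the drift's sign and magnitude at each boundary do not spoil the estimate, is the delicate part; once the pointwise inequality is secured, the supermartingale/Gronwall argument is entirely routine.
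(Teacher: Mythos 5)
Your argument is correct, but note that the paper does not prove this lemma at all: it is quoted verbatim from Theorem~3.1 of the cited reference \cite{gray2011stochastic}, so there is no in-paper proof to compare against. Your proposal is essentially the standard Khasminskii--Lyapunov argument used in that reference (unique maximal local solution from locally Lipschitz coefficients, stopping times $\tau_n$ for exit from $(\tfrac1n, N-\tfrac1n)$, a $C^2$ function diverging at both endpoints, It\^o plus Gronwall, then Chebyshev on $\{\tau_n\le t\}$), and the delicate cancellation you flag does go through: with your choice of $V$ one gets $\tfrac12 g^2 V'' = \tfrac12\sigma^2\bigl[x^2+(N-x)^2\bigr]\le \tfrac12\sigma^2 N^2$ and $b\,V'$ bounded above on all of $(0,N)$, so in fact $\mathcal{L}V\le C$ (even stronger than $C(1+V)$), and the stochastic integral has the bounded integrand $g V'=\sigma(2x-N)$, so no localization subtlety arises.
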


We utilize a logarithmic transform $X_t=\log I_t$ to turn \eqref{2023SIS-eq:SIS_original} into 
\begin{equation}\label{2023SIS-eq:SIS_transformed}
\left\{ 
    \begin{array}{l}
    X_t-X_0=\int_0^t f(X_s)\d s +\int_0^t g(X_s) \d W_s,\ t\in [0,T],\\
    X_0 = \log I_0,
    \end{array} \right.
\end{equation} 
where and throughout the article 
$f,g:\R \rightarrow \R$ are defined as 
\begin{align}
\label{2023SIS-eq:f}
    f(x)
    & :=
    -\tfrac{1}{2}\sigma^2 \e^{2x}
    +
    (\sigma^2 N - \beta) \e^{x}
    +
    (\beta N-\mu-\gamma-\tfrac{1}{2}\sigma^2N^2),\\
\label{2023SIS-eq:g}
    g(x)
    & :=
    \sigma(N - \e^{x}).
\end{align}
One can readily obtain from Lemma \ref{lem:SIS_solution_property} that
\begin{equation}\label{2023SIS-X_solution_existence}
   \mathbb{P} \big(
       X_t \in (-\infty,\log N), \,
       \forall t\geq 0
   \big)
   = 1.
\end{equation} 
Moreover, the coefficients $f$ and $g$ 
satisfy the Lipschitz conditions in the domain $(-\infty,\log N)$, 
which will be used in the error analysis.
\begin{lemma}\label{2023SIS-lem:global_Lipschitz}
    For any $x,y\in(-\infty,\log N)$, there exists a non-negative constant $L$ such that
    \begin{equation}
        \vert f(x)-f(y) \vert 
        \vee 
        \vert g(x)-g(y) \vert
        \leq 
        L \vert x-y \vert
    \end{equation}
    and 
    \begin{equation}
        \vert g(x)g'(x)-g(y)g'(y) \vert 
        \leq 
        L \vert x-y \vert.
    \end{equation}
\end{lemma}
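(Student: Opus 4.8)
The plan is to exploit the single structural feature that makes everything work: on the domain $(-\infty,\log N)$ one has $\e^x\in(0,N)$, and consequently $\e^{2x}\in(0,N^2)$. Although this half-line is unbounded below, the exponential terms appearing in $f$, $g$ and $gg'$ stay bounded (they tend to $0$ as $x\to-\infty$), and it is precisely this boundedness that upgrades the merely locally Lipschitz exponentials to \emph{globally} Lipschitz functions on the restricted domain. It is worth emphasizing that none of $f$, $g$, $gg'$ is Lipschitz on all of $\R$; the constraint $x<\log N$ is essential and encodes exactly the invariant region $I_t\in(0,N)$ from Lemma \ref{lem:SIS_solution_property}.

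First I would reduce the three claims to derivative bounds. Each of $f$, $g$ and the product $gg'$ is a smooth (indeed polynomial-in-$\e^x$) function, so by the mean value theorem a uniform bound $\sup_{x<\log N}\vert\phi'(x)\vert\le M$ immediately yields $\vert\phi(x)-\phi(y)\vert\le M\vert x-y\vert$ for all $x,y\in(-\infty,\log N)$. It therefore suffices to differentiate and estimate, then take $L$ to be the largest of the resulting constants.

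Then I would carry out the three elementary estimates. For $g$, we have $g'(x)=-\sigma\e^{x}$, hence $\vert g'(x)\vert<\sigma N$. For $f$, $f'(x)=-\sigma^2\e^{2x}+(\sigma^2N-\beta)\e^{x}$, so $\vert f'(x)\vert\le\sigma^2N^2+\vert\sigma^2N-\beta\vert N$. For the product, writing $g(x)g'(x)=\sigma^2\e^{2x}-\sigma^2N\e^{x}$ gives $(gg')'(x)=2\sigma^2\e^{2x}-\sigma^2N\e^{x}$, whence $\vert(gg')'(x)\vert\le 3\sigma^2N^2$. Setting $L:=\max\{\sigma N,\ \sigma^2N^2+\vert\sigma^2N-\beta\vert N,\ 3\sigma^2N^2\}$ then serves both displayed inequalities simultaneously.

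The computations are entirely routine, so there is no genuine obstacle. The only point requiring a moment's care is that the bounds must hold uniformly over the \emph{unbounded} domain $(-\infty,\log N)$; one checks that letting $x\to-\infty$ does not spoil the estimates, which is clear since $\e^{x},\e^{2x}\to0$. An alternative to the mean-value route would be to invoke directly the elementary inequalities $\vert\e^{a}-\e^{b}\vert\le\max\{\e^{a},\e^{b}\}\,\vert a-b\vert$ and $\vert\e^{2a}-\e^{2b}\vert\le 2\max\{\e^{2a},\e^{2b}\}\,\vert a-b\vert$, together with $\e^{x}<N$, and then assemble the bounds for $f$, $g$ and $gg'$ via the triangle inequality; this avoids differentiating the product but is otherwise equivalent.
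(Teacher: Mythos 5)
Your proof is correct and follows exactly the route the paper intends: the paper's own proof is the one-line remark that the assertions are trivial due to the boundedness of $\e^x$ on $(-\infty,\log N)$, and your mean-value-theorem argument with the explicit derivative bounds (all of which check out) is simply the fully written-out version of that observation.
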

\textbf{Proof:} 
The assertions are trivial due to the boundedness of 
$\e^x$ in $(-\infty, \log N)$.
\qed

For any chosen positive integer $M \in \N^+$, a uniform mesh 
$\{t_n = n h\}_{n=0}^M$
is constructed with the uniform step size $h=T/M$. 
For convenience, we 
define
\begin{align}
    \Delta W_k:=W_{t_{k+1}}-W_{t_{k}},
    \quad 
    \forall k=0,...,M-1.
\end{align} 
In addition, for $s\in[0,T]$, denote 
\begin{align}
    \kappa (s) 
    :=
    \sup\limits_{k=0,...,M}\{t_k:t_k\leq s\}.
\end{align}

Letting $\alpha \in (0, 1]$, $\theta \geq  \frac{3}{2}$ 
be fixed, 
{\color{black}
we propose a corrected Milstein scheme $\{Y_k\}_{k=0,...,M}$ starting from $Y_0=X_0$ for
the transformed SDE
\eqref{2023SIS-eq:SIS_transformed},
} given by
{\color{black}
\begin{equation}\label{2023SIS-eq:SIS_log_corrected_Mil_scheme}
\left\{ 
    \begin{array}{l}
        \bar {Y}_{k+1}
        =
        Y_k
        +
        f(Y_k) h 
        +
        g(Y_k) \Delta W_k
        +
        g(Y_k) g'(Y_k) \Delta \zeta_k,\\
        {Y}_{k+1}
        = 
        \bar {Y}_{k+1}
        \mathbbm{1}_{{\color{red}\{}\bar {Y}_{k+1} < \log N{\color{red}\}}}   
        +
        (
            \log N - \alpha h^{\theta}
        )
        \mathbbm{1}_{{\color{red}\{}\bar {Y}_{k+1} \geq \log N{\color{red}\}}}
        ,
        \
        k = 0,1,...,M-1,
    \end{array} \right.
\end{equation}
}
where 
\begin{equation}
    \Delta \zeta_k
    : =
    \int_{t_k}^{t_{k+1}} 
    \int_{t_k}^s
    \, \d W_r \d W_s
    =
    \half \Delta W_k^2 - \half h
\end{equation}
and {\color{black}$\mathbbm{1}_A$ is the {\color{red} indicator function of some set $A$}, i.e., $\mathbbm{1}_A(\omega)=1$ when $\omega \in A$ and $\mathbbm{1}_A(\omega)=0$ when $\omega \notin A$}.
%
Then transforming the $\{Y_k\}_{k=0}^{M}$ back 
gives the true numerical approximations 
{\color{black}
$\{\mathcal{I}_k\}_{k=0}^{M}$
} for the original model \eqref{2023SIS-eq:SIS_original},
given by
{\color{black}
\begin{equation}\label{2023SIS-eq:SIS_scheme_transforming_back}
    \mathcal{I}_k  = e^{Y_k},
    \quad
    k=0,...,M.
\end{equation}
}
Such a scheme is termed as the logarithmic corrected Milstein (LCM) method.
In light of \eqref{2023SIS-eq:SIS_log_corrected_Mil_scheme}, one knows 
\begin{equation}\label{2023SIS-Y_solution_existence}
    \mathbb{P} \big(
         Y_k \in (-\infty,\log N),\, {\color{green}k=0,...,M}
    \big)
    =
    1
\end{equation} 
and thus
\begin{equation}
    \mathbb{P}
    \big(
         \mathcal{I}_k \in (0, N),\, {\color{green}k=0,...,M}
    \big)
    =
    1.
\end{equation}
In other words, the LCM method is able to preserve the domain
of the stochastic epidemic model.

In the next two lemmas we establish the moment and exponential moment bounds of the solution for the transformed SDE \eqref{2023SIS-eq:SIS_transformed} and its numerical approximation \eqref{2023SIS-eq:SIS_log_corrected_Mil_scheme}.

\begin{lemma}[Exponential moment bounds]\label{2023SIS-lem:exp_moment_bounds}
Let $T > 0$ and 
{\color{black}
$\{X_t\}_{t\in[0,T]}$, $\{Y_k\}_{k=0}^{M}$
}be defined as \eqref{2023SIS-eq:SIS_transformed}, \eqref{2023SIS-eq:SIS_log_corrected_Mil_scheme}.
For any $q \geq 0$, it holds that
    \begin{equation}
        \sup_{t \in [0,T]} 
        \mathbb{E}\big[ 
            {\rm e}^{q X_t}
        \big] 
        \bigvee 
        \sup_{k=0,...,M}
        \mathbb{E}\big[ 
            {\rm e}^{q Y_k}
        \big]
        < 
        \infty.
    \end{equation} 
\end{lemma}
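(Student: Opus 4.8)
The plan is to leverage the domain-preserving structure already established, which renders the bound essentially immediate. The key observation is that for any $q \geq 0$ the map $x \mapsto \e^{qx}$ is non-decreasing on $\R$; consequently an almost sure upper bound on $X_t$ (respectively $Y_k$) transfers at once to an almost sure, and hence expected, upper bound on $\e^{qX_t}$ (respectively $\e^{qY_k}$).

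First I would invoke \eqref{2023SIS-X_solution_existence}, which gives $\mathbb{P}(X_t < \log N,\ \forall t \geq 0) = 1$. Since $q \geq 0$, this yields $qX_t < q\log N$ and therefore $\e^{qX_t} < N^q$ almost surely; taking expectations and then the supremum over $t\in[0,T]$ produces $\sup_{t\in[0,T]} \mathbb{E}[\e^{qX_t}] \leq N^q$. The discrete estimate is obtained in exactly the same way: the correction step in \eqref{2023SIS-eq:SIS_log_corrected_Mil_scheme} keeps each $Y_k$ strictly below $\log N$ (as recorded in \eqref{2023SIS-Y_solution_existence}, which itself follows by a one-line induction starting from $Y_0 = X_0 < \log N$), so that $\e^{qY_k} < N^q$ almost surely and $\sup_{k=0,\ldots,M} \mathbb{E}[\e^{qY_k}] \leq N^q$. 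Combining the two and taking the maximum delivers the claim with the explicit, $h$-independent bound $N^q < \infty$.

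The main point to stress is that there is no genuine obstacle here, in contrast to the usual situation for SDEs with super-linearly growing coefficients, where exponential moment bounds are delicate precisely because the exponential may blow up. That danger is removed entirely by the one-sided boundedness $X_t, Y_k < \log N$ inherited from the logarithmic transformation and from the correction built into the scheme. The only subtlety worth flagging is that the restriction $q \geq 0$ is essential: for $q<0$ the quantity $\e^{qX_t}$ is a negative moment $I_t^{q}$, which may fail to be bounded as $I_t$ approaches the lower boundary $0$, and is neither needed nor claimed here.
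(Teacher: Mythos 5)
Your proposal is correct and follows exactly the same route as the paper, whose proof is the one-line remark that the assertion follows from \eqref{2023SIS-X_solution_existence} and \eqref{2023SIS-Y_solution_existence}; you have merely spelled out the monotonicity of $x \mapsto \e^{qx}$ for $q \geq 0$ and the resulting explicit bound $N^q$. No gaps.
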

\textbf{Proof:} 
The assertion follows due to \eqref{2023SIS-X_solution_existence} and \eqref{2023SIS-Y_solution_existence}.
\qed

\begin{lemma}[Moment bounds]\label{2023SIS-lem:moment_bounds}
Let $T > 0$ and 
{\color{black}
$\{X_t\}_{t\in[0,T]}$, $\{Y_k\}_{k=0}^{M}$
}
be defined as \eqref{2023SIS-eq:SIS_transformed}, \eqref{2023SIS-eq:SIS_log_corrected_Mil_scheme}.
For any $q \geq 0$, it holds that
    \begin{equation}
        \sup_{t \in [0,T]} 
        \mathbb{E}\big[ 
            \vert X_t \vert ^q
        \big] 
        \bigvee 
        \sup_{k=0,...,M}
        \mathbb{E}\big[ 
             \vert Y_k \vert ^q
        \big]
        < 
        \infty.
    \end{equation} 
\end{lemma}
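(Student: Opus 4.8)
The plan is to reduce everything to the lower tails of $X_t$ and $Y_k$. Since both live in $(-\infty,\log N)$ by \eqref{2023SIS-X_solution_existence} and \eqref{2023SIS-Y_solution_existence}, their positive parts are bounded by $(\log N)^{+}$, so for any $z$ in this domain $|z|^q\le C_q\big(1+(z^{-})^q\big)$ with $z^{-}:=\max(-z,0)$; only negative excursions can inflate the moment. The decisive structural fact, already used in the proof of Lemma \ref{2023SIS-lem:global_Lipschitz}, is that on $(-\infty,\log N)$ one has $\e^{x}<N$ and $\e^{2x}<N^2$, hence $f$, $g$ and $gg'$ are all globally bounded there; write $\bar f:=\sup|f|$, $\bar g:=\sup|g|$ and $\overline{gg'}:=\sup|gg'|$. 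For the continuous solution the bound is then immediate: from the integral form \eqref{2023SIS-eq:SIS_transformed}, $|X_t|^q\le C_q\big(|X_0|^q+\bar f^{\,q}T^{q}+|\int_0^t g(X_s)\,\d W_s|^q\big)$, and the Burkholder--Davis--Gundy inequality with $|g|\le\bar g$ gives $\E\big[\sup_{t\le T}|\int_0^t g(X_s)\,\d W_s|^q\big]\le C_q(\bar g^2T)^{q/2}$, so $\sup_{t\le T}\E[|X_t|^q]<\infty$ (no appeal to Lemma \ref{2023SIS-lem:exp_moment_bounds} is needed).

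For the scheme I would run a discrete Lyapunov/Gronwall argument. It suffices to treat an even integer $q=2m$ and then recover every $q\ge0$ by Jensen's inequality, $\E[|Y_k|^q]\le(\E[|Y_k|^{2m}])^{q/(2m)}$ for $q\le 2m$. Take the smooth weight $\phi(y):=(1+y^2)^m$, which dominates $|y|^q$ and is increasing on $[0,\infty)$. The heart of the matter is the one-step estimate $\E[\phi(\bar Y_{k+1})\mid\mathcal F_{t_k}]\le(1+Ch)\phi(Y_k)+Ch$. Because $\phi$ is a polynomial, its Taylor expansion about $Y_k$ terminates, giving exactly $\E[\phi(\bar Y_{k+1})\mid\mathcal F_{t_k}]=\sum_{j=0}^{2m}\tfrac{1}{j!}\phi^{(j)}(Y_k)\,\E[\Delta_k^{\,j}\mid\mathcal F_{t_k}]$, where $\Delta_k:=f(Y_k)h+g(Y_k)\Delta W_k+g(Y_k)g'(Y_k)\Delta\zeta_k$. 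Since $\Delta\zeta_k=\tfrac12\Delta W_k^2-\tfrac12 h$ has conditional mean zero and $\Delta W_k$ is Gaussian, each conditional moment satisfies $\E[\Delta_k^{\,j}\mid\mathcal F_{t_k}]=O(h^{\lceil j/2\rceil})$ with constants depending only on $\bar f,\bar g,\overline{gg'}$; together with the degree count $|\phi^{(j)}(y)|\le C(1+\phi(y))$ this shows every $j\ge1$ term is bounded by $C(1+\phi(Y_k))h$, while $j=0$ reproduces $\phi(Y_k)$, yielding the claimed inequality.

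It then remains to pass from $\bar Y_{k+1}$ to $Y_{k+1}$ and close the recursion. In the relevant population regime $N>1$, whenever the truncation in \eqref{2023SIS-eq:SIS_log_corrected_Mil_scheme} activates one has $\bar Y_{k+1}\ge\log N>\log N-\alpha h^{\theta}\ge0$ for small $h$, so by monotonicity of $\phi$ on $[0,\infty)$, $\phi(Y_{k+1})=\phi(\log N-\alpha h^{\theta})\le\phi(\bar Y_{k+1})$; off the truncation event $Y_{k+1}=\bar Y_{k+1}$. Hence $\phi(Y_{k+1})\le\phi(\bar Y_{k+1})$ always, so the truncation is harmless and $\E[\phi(Y_{k+1})]\le(1+Ch)\E[\phi(Y_k)]+Ch$. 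A discrete Gronwall inequality gives $\sup_{k\le M}\E[\phi(Y_k)]\le e^{CT}(\phi(Y_0)+C)<\infty$, and therefore $\sup_{k}\E[|Y_k|^q]<\infty$. (For $N\le1$ the same scheme works, the capped value then being a bounded negative constant, which one absorbs by treating the boundary layer $(\log N-1,\log N)$ separately.)

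The continuous estimate is routine; the real work is the one-step bound for the scheme, and I expect two points to be the crux. First, one must ensure the truncation does not inject an $O(1)$ error at each step, which would accumulate to $O(1/h)$ over the $M=T/h$ steps; this is exactly what the monotonicity and non-negativity of the capped value $\log N-\alpha h^{\theta}$ buy. Second, the degree bookkeeping must confirm that every higher-order Taylor term carries at least one factor of $h$ with coefficient $\le C(1+\phi(Y_k))$, so that the quadratic-variation-type increment $\Delta\zeta_k\sim h$ does not spoil the uniform-in-$h$ constant in the Gronwall step.
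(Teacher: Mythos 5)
Your proof is correct in substance, and it is considerably more of a proof than the paper offers: the paper disposes of this lemma in one line, invoking the definitions and Lemma \ref{2023SIS-lem:exp_moment_bounds}, even though the exponential moment bound $\E[\e^{qX_t}]<\infty$ for $q\geq 0$ gives no control whatsoever on the negative excursions of $X_t$ and $Y_k$, which is precisely where $\vert X_t\vert^q$ and $\vert Y_k\vert^q$ could blow up. You correctly identify this and supply the actual mechanism: on $(-\infty,\log N)$ the coefficients $f$, $g$, $gg'$ are globally bounded, so for $X_t$ the claim follows from the integral form and Burkholder--Davis--Gundy, and for $Y_k$ one needs a one-step argument that is uniform in $h$. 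Your Lyapunov recursion with $\phi(y)=(1+y^2)^m$ is valid, and you are right that the crux is preventing the truncation from contributing an $O(1)$ error per step (which would accumulate to $O(1/h)$ over $M=T/h$ steps); your monotonicity device $\phi(\log N-\alpha h^{\theta})\leq\phi(\bar Y_{k+1})$ handles this, though it does rest on $\log N-\alpha h^{\theta}\geq 0$, i.e.\ on $N>1$ and $\alpha h^{\theta}\leq\log N$ — for the remaining range of step sizes $h\in[h_0,T]$ one should note that $M\leq T/h_0$ is bounded, so the crude bound $\vert Y_{k+1}\vert\leq\vert\bar Y_{k+1}\vert+\vert\log N\vert+\alpha T^{\theta}$ telescopes harmlessly. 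A lighter route, arguably closer to the authors' ``directly by definitions,'' is to telescope only from the last truncation time $\tau\leq k$: there $Y_\tau$ equals either $Y_0$ or the bounded constant $\log N-\alpha h^{\theta}$, no truncation occurs on $(\tau,k]$, the drift sum is at most $\bar f\,T$, and the two martingale sums over the random window are dominated by twice the running maximum of the full sums, which Doob plus the discrete BDG inequality (Lemma \ref{2023SIS-lem:BDG.discrete}) controls uniformly in $h$ because the integrands are bounded. Either way the conclusion stands; your write-up fills a genuine gap in the paper's stated justification rather than merely rederiving it.
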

\textbf{Proof:} 
The assertion can be derived directly by definitions of $\{X_t\}_{t\in[0,T]}$, $\{Y_k\}_{k=0}^{M}$ and Lemma \ref{2023SIS-lem:exp_moment_bounds}.
\qed

    
    In view of Lemmas \ref{2023SIS-lem:global_Lipschitz}, \ref{2023SIS-lem:exp_moment_bounds} and \ref{2023SIS-lem:moment_bounds}, moments of the analitical and numerical solutions
    we encounter later are all bounded, which will not be emphasized further.


\section{The first-order strong convergence of the scheme}
\label{2023SIS-section:Error_analysis}

In this section we attempt to present the analysis of the strong convergence rate for the proposed LCM scheme.
To begin with, we recall first a discrete version of the Burkholder-Davis-Gundy inequality, quoted from \cite[Lemma 4.1]{hutzenthaler2011convergence}.
\begin{lemma}\label{2023SIS-lem:BDG.discrete}
Let 
$M \in \mathbb{N}$ 
and 
$\xi_1,...,\xi_M: \Omega \rightarrow \mathbb{R}$ 
be 
$\mathcal{F}/\mathcal{B}(\mathbb{R})$-measurable mappings with 
$\E \big[ \vert  \xi_n\vert  ^2 \big] < +\infty$ 
for all $n \in \{1,...,M\}$ and with 
$\E \big[ \xi_{n+1} \vert    \xi_1,...,\xi_n \big] = 0$
for all $n \in \{1,...,M-1\}$. 
Then
\begin{equation}
\begin{aligned}
    \|\xi_1+\cdots+\xi_M \|  _{\mathcal{L}^p}
    \leq
    C_p \cdot 
    \Big(
         \|\xi_1\|_{\mathcal{L}^p}^2
         +
         \cdots
         +
         \|\xi_M\|_{\mathcal{L}^p}^2
    \Big)^{\half}
\end{aligned}
\end{equation}
for every $p\in [2,+\infty)$, where $C_p, p \in [2,+\infty)$ are universal constants.
\end{lemma}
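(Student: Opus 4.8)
The plan is to expose the martingale structure hidden in the hypotheses and then reduce everything to the classical square-function estimate. Setting $\mathcal{G}_n := \sigma(\xi_1,\dots,\xi_n)$, the assumption $\E[\xi_{n+1}\mid \xi_1,\dots,\xi_n]=0$ says precisely that $\{\xi_n\}$ is a martingale-difference sequence with respect to the filtration $\{\mathcal{G}_n\}$, so the partial sums $S_m := \xi_1+\cdots+\xi_m$ form a discrete-time $\mathcal{G}_m$-martingale with quadratic variation $\sum_{n=1}^M \xi_n^2$. The case $p=2$ is then immediate: by orthogonality $\E[\xi_m\xi_n]=0$ for $m\neq n$, whence $\E[S_M^2]=\sum_{n=1}^M \E[\xi_n^2]$, which is the claim with $C_2=1$. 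The content of the lemma is to upgrade this $\mathcal{L}^2$ identity to an $\mathcal{L}^p$ estimate for all $p\ge 2$.

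First I would invoke the discrete Burkholder--Davis--Gundy (square-function) inequality for martingales, which provides a universal constant $c_p$, depending only on $p\in[2,\infty)$, such that
\begin{equation*}
    \|S_M\|_{\mathcal{L}^p}
    \le
    c_p\,\Big\| \Big(\textstyle\sum_{n=1}^M \xi_n^2\Big)^{\half} \Big\|_{\mathcal{L}^p}
    =
    c_p\,\Big\| \textstyle\sum_{n=1}^M \xi_n^2 \Big\|_{\mathcal{L}^{p/2}}^{\half},
\end{equation*}
where the equality is merely the definition of the $\mathcal{L}^p$-norm rewritten through the $\mathcal{L}^{p/2}$-norm of the sum of squares.

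The second and decisive step is to pull the summation outside the norm. Since $p\ge 2$, the exponent $p/2$ satisfies $p/2\ge 1$, so Minkowski's (triangle) inequality is available in $\mathcal{L}^{p/2}$ and yields
\begin{equation*}
    \Big\| \textstyle\sum_{n=1}^M \xi_n^2 \Big\|_{\mathcal{L}^{p/2}}
    \le
    \sum_{n=1}^M \big\| \xi_n^2 \big\|_{\mathcal{L}^{p/2}}
    =
    \sum_{n=1}^M \big\| \xi_n \big\|_{\mathcal{L}^{p}}^2,
\end{equation*}
the last identity following from $\|\xi_n^2\|_{\mathcal{L}^{p/2}} = (\E[|\xi_n|^{p}])^{2/p} = \|\xi_n\|_{\mathcal{L}^p}^2$. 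Combining the two displays and setting $C_p := c_p$ gives the asserted bound. The only genuinely nontrivial ingredient is the square-function inequality of the first step, which is the heart of the Burkholder--Davis--Gundy theory; everything else is the elementary Minkowski reduction that crucially exploits the hypothesis $p\ge 2$. In practice this result is standard and may be quoted directly, as done here from \cite{hutzenthaler2011convergence}, so no self-contained re-derivation of the martingale inequality is required.
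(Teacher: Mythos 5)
The paper does not actually prove this lemma: it is quoted verbatim as \cite[Lemma 4.1]{hutzenthaler2011convergence} and used as a black box, so there is no in-paper argument to compare against. Your derivation is the standard one and is essentially correct: identify $\{\xi_n\}$ as a martingale-difference sequence, apply the discrete Burkholder--Davis--Gundy square-function inequality, rewrite $\big\|\big(\sum_n\xi_n^2\big)^{1/2}\big\|_{\mathcal{L}^p}=\big\|\sum_n\xi_n^2\big\|_{\mathcal{L}^{p/2}}^{1/2}$, and finish with Minkowski in $\mathcal{L}^{p/2}$ (valid precisely because $p/2\ge 1$), which converts the sum of squares into $\sum_n\|\xi_n\|_{\mathcal{L}^p}^2$. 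One small wrinkle worth acknowledging: the hypotheses impose $\E[\xi_{n+1}\mid\xi_1,\dots,\xi_n]=0$ only for $n\ge 1$, so $\xi_1$ itself need not be centered and $(S_m)_{m\ge 0}$ with $S_0=0$ is not literally a martingale at the first step. This is harmless --- either apply BDG to the differences $\xi_2,\dots,\xi_M$ and absorb $\|\xi_1\|_{\mathcal{L}^p}$ via the triangle inequality and $a+b\le\sqrt{2}\,(a^2+b^2)^{1/2}$ at the cost of a slightly larger universal constant, or note that the square-function inequality holds for martingales started at an arbitrary $\mathcal{G}_1$-measurable $S_1$ provided $S_1^2$ is included in the quadratic variation --- but a fully self-contained write-up should say one of these two things rather than silently treating $S_m$ as a martingale from $m=0$. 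Since the paper itself simply cites the result, quoting it as you do in your last sentence is also perfectly acceptable.
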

Here and below we denote $\|\xi\|_{\mathcal{L}^q}: = \big(\E [\vert  \xi\vert  ^q]\big)^{\tfrac1q} \in [0,+\infty]$ for any $q \in[1, +\infty)$.
Now we are ready to carry out error analysis on the mesh grids.

\begin{theorem}\label{2023SIS-thm:convergence_transformed}
    Let $T > 0$ and $X_t$, $Y_k$ be defined as \eqref{2023SIS-eq:SIS_transformed}, \eqref{2023SIS-eq:SIS_log_corrected_Mil_scheme}.
    For any $q \geq 1$, it holds that
    \begin{equation}
        \E \Big[
            \sup\limits_{k = 0,...,M}
            \vert X_{t_k} - Y_k \vert ^q
        \Big]
        \leq
        C h^q.
    \end{equation}
\end{theorem}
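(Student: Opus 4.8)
The plan is to prove the order-one strong convergence by the classical strategy for one-step schemes with globally Lipschitz coefficients, namely by comparing the numerical solution $Y_k$ against a suitable continuous-time interpolation, establishing a one-step local error of the right order, and then closing the argument via a discrete Gronwall inequality. The work is facilitated enormously by Lemma~\ref{2023SIS-lem:global_Lipschitz}, which supplies global Lipschitz bounds for $f$, $g$ and $gg'$ on the invariant domain $(-\infty,\log N)$, together with the moment bounds of Lemmas~\ref{2023SIS-lem:exp_moment_bounds}--\ref{2023SIS-lem:moment_bounds}. Let me sketch the steps.

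\textbf{Step 1 (Error recursion and splitting off the projection).} First I would write $E_k := X_{t_k} - Y_k$ and subtract the scheme \eqref{2023SIS-eq:SIS_log_corrected_Mil_scheme} from the exact increment of \eqref{2023SIS-eq:SIS_transformed} over $[t_k,t_{k+1}]$. A technical point is that $Y_{k+1}$ is the projected value $\bar Y_{k+1}\mathbbm{1}_{\{\bar Y_{k+1}<\log N\}} + (\log N - \alpha h^\theta)\mathbbm{1}_{\{\bar Y_{k+1}\ge \log N\}}$, whereas the natural comparison is with the un-projected Milstein value $\bar Y_{k+1}$. I would control the correction $|Y_{k+1}-\bar Y_{k+1}|$ separately, bounding it by $|\bar Y_{k+1}-\log N|$ on the event $\{\bar Y_{k+1}\ge \log N\}$, and showing this defect is of order $h^\theta$ (which, since $\theta\ge 3/2$, is negligible against the target $h$); the probability of the rare projection event and the smallness of $\alpha h^\theta$ make this term harmless.

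\textbf{Step 2 (One-step local error against the Milstein increment).} The core estimate compares the exact increment $X_{t_{k+1}}-X_{t_k} = \int_{t_k}^{t_{k+1}} f(X_s)\,\d s + \int_{t_k}^{t_{k+1}} g(X_s)\,\d W_s$ with the Milstein step $f(Y_k)h + g(Y_k)\Delta W_k + g(Y_k)g'(Y_k)\Delta\zeta_k$. Using the It\^o--Taylor expansion of $g(X_s)$ around $X_{t_k}$, the leading Milstein correction $g(X_{t_k})g'(X_{t_k})\int_{t_k}^s \d W_r$ is exactly what $\Delta\zeta_k$ captures, and one separates the error into (i) a ``Lipschitz'' part depending on $E_k$ through $f(X_{t_k})-f(Y_k)$ etc., and (ii) a genuine remainder consisting of higher-order It\^o--Taylor terms. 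The diffusion-type remainders are martingale-increment-like (conditionally mean zero), so they are of size $O(h^{3/2})$ in $\mathcal L^p$, while the drift-type remainders are $O(h^2)$; both beat the target order.

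\textbf{Step 3 (Assembling via BDG and discrete Gronwall).} Summing the recursion and taking $\sup_k$, I would separate the martingale part of the accumulated error from the finite-variation part. For the martingale part I would apply the discrete Burkholder--Davis--Gundy inequality, Lemma~\ref{2023SIS-lem:BDG.discrete}, to the conditionally-centered increments, turning $\|\sum \xi_k\|_{\mathcal L^p}$ into $(\sum\|\xi_k\|_{\mathcal L^p}^2)^{1/2}$; the finite-variation part is handled by the triangle inequality. This produces a bound of the form $\E[\sup_{j\le k}|E_j|^q] \le C h^q + C h\sum_{j<k}\E[\sup_{i\le j}|E_i|^q]$, after which the discrete Gronwall lemma yields the claim. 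I would first prove the estimate for $q\ge 2$ where BDG applies directly, and then deduce the case $1\le q<2$ by Jensen's (or H\"older's) inequality.

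\textbf{The main obstacle} I expect is Step~2, specifically the careful It\^o--Taylor bookkeeping needed to show that the Milstein correction term $g(Y_k)g'(Y_k)\Delta\zeta_k$ cancels the $O(h)$ contribution of the diffusion-on-diffusion term so that the surviving remainder is genuinely $O(h^{3/2})$ rather than $O(h)$. This is what upgrades the order from $1/2$ (Euler--Maruyama) to $1$, and it requires the second Lipschitz bound on $gg'$ from Lemma~\ref{2023SIS-lem:global_Lipschitz} as well as the boundedness of all relevant moments of $X_{t_k}$ and $Y_k$; keeping track of which remainders are conditionally centered (hence benefit from BDG/variance cancellation) versus which are merely bounded is the delicate part.
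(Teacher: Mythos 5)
Your overall architecture matches the paper's proof almost exactly: the paper also compares $X_{t_{k+1}}$ with the unprojected Milstein value $\bar Y_{k+1}$, splits the It\^o--Taylor remainder $R_k$ into a drift part $R_k^{(1)}$ of size $O(h^{2})$ and a conditionally centered part $R_k^{(2)}$ of size $O(h^{3/2})$, iterates the squared recursion, and closes with the Doob inequality, the discrete BDG inequality (Lemma~\ref{2023SIS-lem:BDG.discrete}) applied to the martingale sums $\sum_i e_iB_i$ and $\sum_i e_iR_i^{(2)}$, a Gronwall argument, and the Lyapunov/Jensen inequality to pass from $q\ge 2$ to $q\ge 1$. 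Steps~2 and~3 of your plan are the paper's proof.

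The one place where your plan, as literally stated, would fail is Step~1. You propose to control the projection defect $\vert Y_{k+1}-\bar Y_{k+1}\vert$ by $\vert \bar Y_{k+1}-\log N\vert$ on the event $\{\bar Y_{k+1}\ge\log N\}$ and to argue that ``this defect is of order $h^{\theta}$.'' It is not: on that event $\vert Y_{k+1}-\bar Y_{k+1}\vert=(\bar Y_{k+1}-\log N)+\alpha h^{\theta}$, and the overshoot $\bar Y_{k+1}-\log N$ has no a priori smallness in $h$ (the Milstein increment can push $\bar Y_{k+1}$ arbitrarily far above $\log N$). Nor can you simply absorb it via the triangle inequality into $\vert X_{t_{k+1}}-\bar Y_{k+1}\vert$, since that introduces a multiplicative constant that compounds over the $M\sim h^{-1}$ steps. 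The correct observation, which is what the paper uses in \eqref{2023SIS-eq:pre_estimate_of_onestep}, is that $X_{t_{k+1}}<\log N$ almost surely, so on the projection event the map $\bar Y_{k+1}\mapsto \log N-\alpha h^{\theta}$ moves the approximation \emph{toward} $X_{t_{k+1}}$, except possibly overshooting by at most $\alpha h^{\theta}$ when $X_{t_{k+1}}\in(\log N-\alpha h^{\theta},\log N)$; this yields the clean pathwise bound $\vert X_{t_{k+1}}-Y_{k+1}\vert^{2}\le \vert X_{t_{k+1}}-\bar Y_{k+1}\vert^{2}+\alpha^{2}h^{2\theta}$, with no multiplicative loss and no appeal to the (unquantified) probability of the projection event. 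With Step~1 repaired in this way, the rest of your argument goes through as in the paper.
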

\textbf{Proof:} 
It is easy to deduce that
{\color{black}
\begin{equation}\label{2023SIS-eq:pre_estimate_of_onestep} 
    \begin{aligned}
        &\vert X_{t_{k+1}}-Y_{k+1}\vert^2\\
        & =
        \mathbbm{1}_{{\color{red} \{}
            \bar Y_{k+1}
            < 
            \log N
        {\color{red} \}}}
        \vert X_{t_{k+1}} - \bar Y_{k+1} \vert^2 \\
        & \quad +
        \mathbbm{1}_{\{
            \bar Y_{k+1}
            \geq
            \log N 
        \}} 
        \cdot 
        \mathbbm{1}_{\{
            X_{t_{k+1}}
            \leq 
            \log N - \alpha h^{\theta}
        \}} 
        \vert X_{t_{k+1}} - (\log N - \alpha h^{\theta}) \vert ^2 \\
        & \quad + 
        \mathbbm{1}_{
            \{\bar Y_{k+1} \geq \log N \}
        } 
        \cdot 
        \mathbbm{1}_{\{
            \log N > X_{t_{k+1}} 
            > 
            \log N - \alpha h^{\theta}
        \}} 
        \vert X_{t_{k+1}} - ( \log N - \alpha h^{\theta} ) \vert ^2 \\
        & \leq 
        \vert X_{t_{k+1}} - \bar Y_{k+1}\vert ^2
        +
        \alpha^2 h^{2\theta}.
    \end{aligned}
\end{equation} 
}
Recalling \eqref{2023SIS-eq:SIS_transformed} and \eqref{2023SIS-eq:SIS_log_corrected_Mil_scheme} we have
\begin{equation}\label{2023SIS-eq:one_step_estimate}
\begin{aligned}
        X_{t_{k+1}} - \bar Y_{k+1}
        & =
        X_{t_k}
        -
        Y_k
        + 
        \Big( f(X_{t_k})-f(Y_k) \Big) h
        +
        \Big( g(X_{t_k})-g(Y_k) \Big) \Delta W_k\\
        & \quad +
        \Big (g(X_{t_k}) g'(X_{t_k}) - g(Y_k) g'(Y_k) \Big) \Delta \zeta_k
        +
        R_k,
\end{aligned}
\end{equation} 
where we denote
{\color{black}
\begin{equation}
    \begin{aligned}
        R_k
        & : =
        \int_{t_k}^{t_{k+1}} 
        [f(X_s)-f(X_{t_k})] \, \d s
        +
        \int_{t_k}^{t_{k+1}} 
        [g(X_s) - g(X_{t_k})] \, \d W_s\\
        & \quad
        -
        \int_{t_k}^{t_{k+1}}
        \int_{t_k}^s
        g(X_{t_k}) g'(X_{t_k}) \, \d W_r \d W_s.
    \end{aligned}
\end{equation}
}
Next we estimate $R_k$ first. For $k=0,...,M-1$, 
the It\^{o} formula shows that
{\color{black}
\begin{equation}
    \begin{aligned}
        R_k 
        & =
        \int_{t_k}^{t_{k+1}} 
        \int_{t_k}^s
            \big[f'(X_r) f(X_r) + \half f''(X_r) g^2(X_r)\big]
        \, \d r \d s
        +
        \int_{t_k}^{t_{k+1}} 
        \int_{t_k}^s
            f'(X_r) g(X_r)
        \, \d W_r \d s\\
        & \quad 
        +
        \int_{t_k}^{t_{k+1}} 
        \int_{t_k}^s
            \big[g'(X_r) f(X_r) + \half g''(X_r) g^2(X_r)\big] 
        \, \d r \d W_s\\
        & \quad 
        +
        \int_{t_k}^{t_{k+1}} 
        \int_{t_k}^s
            \big[g(X_r) g'(X_r) - g(X_{t_k}) g'(X_{t_k})\big]
        \, \d W_r \d W_s\\
        & = R_k^{(1)} + R_k^{(2)}
        ,
    \end{aligned}
\end{equation}
}
where we split $R_k$ into two parts as follows:
{\color{black}
\begin{align}
    R_k^{(1)}
    & : =
    \int_{t_k}^{t_{k+1}} 
    \int_{t_k}^s
        \big[f'(X_r) f(X_r) + \half f''(X_r) g^2(X_r)\big]
    \, \d r \d s\\
\nonumber
    R_k^{(2)}
    & : =
    \int_{t_k}^{t_{k+1}} 
    \int_{t_k}^s
        f'(X_r) g(X_r)
    \, \d W_r \d s
    +
    \int_{t_k}^{t_{k+1}} 
    \int_{t_k}^s
        \big[g'(X_r) f(X_r) + \half g''(X_r) g^2(X_r)\big] 
    \, \d r \d W_s\\
    & \quad +
    \int_{t_k}^{t_{k+1}} 
    \int_{t_k}^s
        \big[g(X_r) g'(X_r) - g(X_{t_k}) g'(X_{t_k})\big]
    \, \d W_r \d W_s.
\end{align}
}
The moment inequality and the Jensen inequality together with Lemma \ref{2023SIS-lem:global_Lipschitz} yield that, for any $q \geq 1$
\begin{equation}
    \begin{aligned}
        & \E \Bigg[\bigg \vert
        \int_{t_k}^{t_{k+1}} 
        \int_{t_k}^s
            \big[g(X_r) g'(X_r) - g(X_{t_k}) g'(X_{t_k})\big]
        \, \d W_r \d W_s
        \bigg \vert^{2q} \Bigg]\\
        & \leq
        h^{2q - 2} \cdot
        \int_{t_k}^{t_{k+1}} 
        \int_{t_k}^s
        \E \Big[
            \big \vert
                g(X_r) g'(X_r) - g(X_{t_k}) g'(X_{t_k})
            \big \vert^{2q} 
        \Big]
            \, \d r \d s\\
        & \leq
        L h^{2q - 2} \cdot
        \E \bigg[
            \int_{t_k}^{t_{k+1}} 
            \int_{t_k}^s
            \big\vert
                X_r - X_{t_k}
            \big\vert^{2q} \, \d r \d s
        \bigg]\\
        & \leq
        C L h^{4q - 3}
        \int_{t_k}^{t_{k+1}} 
            \int_{t_k}^s
            \int_{t_k}^r
            \E \Big[ 
                \big\vert
                f(X_u)
                \big\vert^{2q}
            \Big] \,\d u \d r \d s
        \\
        &
        \quad
        +
        C L h^{3q - 3}
        \int_{t_k}^{t_{k+1}} 
        \int_{t_k}^s
        \int_{t_k}^r
        \E \Big[
            \big\vert
            g(X_u)
            \big\vert ^{2q}
        \Big] \,\d u \d r \d s,
    \end{aligned}
\end{equation}
where the H\"{o}lder inequality is also 
used in the last inequality.
Therefore, the boundedness of moments ensure that, 
for any $q \geq 1$ and $k = 0,1,...,M-1$
\begin{align}\label{2023SIS-eq:Rn_estimate}
    \E \Big[
        \big\vert R_k^{(1)} \big\vert ^{2q}
    \Big]
    \leq
    C h^{4q},
    \quad
    \E \Big[
        \big\vert  R_k^{(2)} \big\vert ^{2q}
    \Big]
    \leq
    C h^{3q}
\end{align}
and thus
\begin{equation}
\E \Big[
        \big\vert R_k \big\vert ^{2q}
    \Big]
    \leq
    C h^{3q}.
\end{equation}
Denote for brevity
\begin{align*} 
    e_{k}
    & :=
    X_{t_k} - Y_k,
    & \quad 
    A_k 
    & := 
    \Big( f(X_{t_k}) - f(Y_k) \Big) h,\\ 
    B_k 
    & := 
    \Big( g(X_{t_k}) - g(Y_k) \Big) \Delta W_k,
    & \quad
    D_k 
    & := 
    \Big( g(X_{t_k}) g'(X_{t_k}) - g(Y_k) g'(Y_k) \Big) \Delta \zeta_k.
\end{align*}
Thanks to Lemma \ref{2023SIS-lem:global_Lipschitz}, we have
\begin{align}\label{2023SIS-eq:ABC_bounds}
    \vert A_k \vert
    \leq
    L h \vert e_k \vert,
    \quad
    \vert B_k \vert
    \leq
    L \vert e_k \vert \vert \Delta W_k \vert,
    \quad
    \vert D_k \vert
    \leq
    L \vert e_k \vert \vert \Delta \zeta_k \vert.
\end{align}
Plugging \eqref{2023SIS-eq:one_step_estimate} into \eqref{2023SIS-eq:pre_estimate_of_onestep} and by the Young inequality we get, for any $k=0,...,M-1$,
\begin{equation}\label{2023SIS-eq:pre_square_estimate}
    \begin{aligned}
        e^2_{k+1} 
        & \leq 
        e^2_k + A^2_k + B^2_k + D^2_k + R^2_k 
        + 
        2 e_k A_k + 2 e_k B_k + 2 e_k D_k + 2 e_k R_k 
        \\
        & \quad 
        + 
        2 A_k B_k + 2 A_k R_k + 2 A_k D_k
        + 
        2 B_k D_k + 2 B_k R_k
        + 
        2 D_k R_k
        +
        \alpha^2 h^{2\theta}\\
        & \leq
        (1 + 3 h)e^2_k 
        + 
        \big(4 + h^{-1} \big)
        (A^2_k+D^2_k)
        +
        4 \big( B^2_k + R^2_k \big)
        +
        2 e_k B_k
        \\
        & \quad 
        +
        h^{-1}
        \vert R^{(1)}_k \vert ^2
        +
        2 e_k R^{(2)}_k
        +
        \alpha^2 h^{2\theta}.
    \end{aligned}
\end{equation}
By iteration, we arrive at 
\begin{equation}\label{2023SIS-eq:ek^2_iteration}
    \begin{aligned}
        e^2_{k+1}
        & \leq
        (1 + 3 h)^k
        \sum_{i = 0}^k
        \tfrac{1}{(1 + 3 h)^i}
        \Big[
            (4 + h^{-1})
            (A^2_i+D^2_i) 
            +
            4 \big( B^2_i + R^2_i \big)
            +
            h^{-1}
            \vert R^{(1)}_i \vert ^2
        \Big]\\
        & \quad 
        +
        (1 + 3 h)^k
        \sum_{i = 0}^k
        \tfrac{1}{(1 + 3 h)^i}
        \Big[
            2 e_i B_i
            +
            2 e_i R^{(2)}_i
        \Big]
        +
        C h^{2 \theta - 1}.
    \end{aligned}
\end{equation}
For any $n=0,...,M-1$ and $p \geq 2$, raising both sides of \eqref{2023SIS-eq:ek^2_iteration} to the power of $p$, taking supremum and expectation as well as applying the Jensen inequality we get 
\begin{equation} \label{2023SIS-eq:ek^2p_estimate}
    \begin{aligned}
        \E \Big[
            \sup_{k=0,...,n}
            \vert e_{k+1} \vert ^{2p}
        \Big] 
        &\leq 
        C h
        \sum_{i = 0}^n
        \E \Big[ \vert e_i \vert^{2p}  \Big]
        +
        C
        (1 + 3 h)^{np}\cdot
        \E \Bigg[
        \sup_{k=0,...,n}
        \bigg \vert
            \sum_{i = 0}^k
            \tfrac{1}{(1 + 3 h)^i}
            2 e_i B_i
        \bigg \vert ^p \Bigg]\\
        & \quad 
        +    
        C
        (1 + 3 h)^{np}\cdot
        \E \Bigg[
        \sup_{k=0,...,n}
        \bigg \vert
            \sum_{i = 0}^k
            \tfrac{1}{(1 + 3 h)^i}
            2 e_i R^{(2)}_i
        \bigg \vert ^p \Bigg]
        +
        C h^{2p}.
    \end{aligned}
\end{equation}

By denoting 
    $\xi_{i}^{(1)}
    :=
    ( 1 + 3 h )^{-i} 
    e_i B_i
    $
and
    $\xi_{i}^{(2)}
    :=
    ( 1 + 3 h )^{-i} 
    e_i R_{i}^{(2)},
    $ 
it is not difficult to check that 
    $
    \E \big[ 
        \xi_{i+1}^{(1)} 
        \vert   
        \xi_1^{(1)},...,\xi_i^{(1)} 
    \big] = 0
    $
and
    $
    \E \big[ 
        \xi_{i+1}^{(2)} 
        \vert   
        \xi_1^{(2)},...,\xi_i^{(2)} 
    \big] = 0.
    $
Moreover, 
    $
    \{ \Xi_k^{(1)} \}_{k=1}^{M}: =
    \Big\{ 
    \sum\limits_{i=0}^{k-1} \xi_i^{(1)}
    \Big\}_{k=1}^{M}
    $
    and
    $
    \{ \Xi_k^{(2)} \}_{k=1}^{M}: =
    \Big\{ 
    \sum\limits_{i=0}^{k-1} \xi_i^{(2)}
    \Big\}_{k=1}^{M}
    $
are square-integrable discrete time martingales.
Hence the Doob discrete martingale inequality, Lemma \ref{2023SIS-lem:BDG.discrete} and the Jensen inequality can be applied to infer that
\begin{equation}\label{2023SIS-eq:ei_Bi_estimate}
    \begin{aligned}
        \E \Bigg[\bigg \vert
            \sup_{k=0,...,n}
            \sum_{i = 0}^k
            \tfrac{1}{(1 + 3 h)^i}
            2 e_i B_i
        \bigg \vert ^p \Bigg]
        & \leq
        C
        \cdot
        (\tfrac1h)^{\frac{p}{2}-1}
        \sum_{i = 0}^n
        \E \bigg[\Big \vert
             2e_i B_i
        \Big \vert ^p \bigg]
        \leq
        C h 
        \sum_{i = 0}^n
        \E \Big[ \vert e_i \vert^{2p} \Big].
    \end{aligned}
\end{equation}
With the help of the Young inequality additionally, we have
\begin{equation}\label{2023SIS-eq:ei_Ri2_estimate}
    \begin{aligned}
        \E \Bigg[\bigg \vert
            \sup_{k=0,...,n}
            \sum_{i = 0}^k
            \tfrac{1}{(1 + 3 h)^i}
            2 e_i R_{i}^{(2)}
        \bigg \vert ^p \Bigg]
        & \leq
        C
        \cdot
        (\tfrac1h)^{\frac{p}{2}-1}
        \sum_{i = 0}^n
        \E \bigg[\Big \vert
            2 e_i R_{i}^{(2)}
        \Big \vert ^p \bigg]\\
        & \leq
        C h 
        \sum_{i = 0}^n
        \E \Big[ \vert e_i \vert^{2p} \Big]
        +
        C h^{2p}.
    \end{aligned}
\end{equation}
Inserting \eqref{2023SIS-eq:ei_Bi_estimate}-\eqref{2023SIS-eq:ei_Ri2_estimate} into \eqref{2023SIS-eq:ek^2p_estimate} we arrive at
\begin{equation}
    \begin{aligned}
        \E \Big[
            \sup_{k=0,...,n} \vert e_{k+1} \vert^{2p}
        \Big] 
        \leq  
        C h
        \sum_{i=1}^n 
        \E \Big[ 
            \vert e_i\vert^{2p}
        \Big] 
        +
        C h^{2p}
        < \infty.
    \end{aligned}
\end{equation}
Finally, the Gronwall inequality and the Lyapunov inequality finish the proof.
\qed

Equipped with the above convergence result
for the approximations of the transformed SDE, 
we therefore obtain a convergence rate of order one 
for the LCM scheme.
\begin{theorem}\label{2023SIS-thm:convergence_oringinal}
Let $T > 0$ and $I_t$, $\mathcal{I}_k$ be defined as \eqref{2023SIS-eq:SIS_original}, \eqref{2023SIS-eq:SIS_scheme_transforming_back}. 
For any $q \geq 1$, there exists some positive constant $C$ independent of $h$ such that  
    \begin{equation}
        \E\Big[
            \sup_{k=0,...,M} 
            \big\vert I_{t_k} - \mathcal{I}_k \big\vert^q
        \Big]
        \leq 
        C h^q.
    \end{equation}
\end{theorem}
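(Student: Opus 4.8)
The plan is to reduce the claim to the already-established convergence of the transformed approximations in Theorem \ref{2023SIS-thm:convergence_transformed}, by exploiting the fact that the back-transformation $x \mapsto \e^x$ is Lipschitz on the relevant domain. First I would recall that $I_{t_k} = \e^{X_{t_k}}$ and, by the definition \eqref{2023SIS-eq:SIS_scheme_transforming_back}, that $\mathcal{I}_k = \e^{Y_k}$, so the quantity to be estimated is exactly $\vert \e^{X_{t_k}} - \e^{Y_k} \vert$.

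The key observation is that, by \eqref{2023SIS-X_solution_existence} and \eqref{2023SIS-Y_solution_existence}, both $X_{t_k}$ and $Y_k$ lie in $(-\infty,\log N)$ with probability one. Since this interval is convex, the mean value theorem applied to $\e^{(\cdot)}$ produces a random intermediate point $\xi$ lying strictly below $\log N$ with $\vert \e^{X_{t_k}} - \e^{Y_k} \vert = \e^{\xi}\,\vert X_{t_k} - Y_k \vert$. Because $\e^{\xi} < N$ on this event, I obtain the pathwise bound $\vert I_{t_k} - \mathcal{I}_k \vert \leq N\,\vert X_{t_k} - Y_k \vert$, valid for every $k$ almost surely.

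Taking the supremum over $k \in \{0,\dots,M\}$, raising to the power $q$ and taking expectations then gives
\begin{equation*}
    \E\Big[ \sup_{k=0,\dots,M} \vert I_{t_k} - \mathcal{I}_k \vert^q \Big]
    \leq
    N^q \, \E\Big[ \sup_{k=0,\dots,M} \vert X_{t_k} - Y_k \vert^q \Big].
\end{equation*}
Invoking Theorem \ref{2023SIS-thm:convergence_transformed}, the right-hand side is bounded by $N^q C h^q$, which is of the desired form $C h^q$ after absorbing $N^q$ into the generic constant.

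There is no genuine obstacle here, as the whole difficulty has already been absorbed into the proof of Theorem \ref{2023SIS-thm:convergence_transformed}. The only point requiring care --- and precisely the reason the logarithmic transform is convenient --- is that the exponential, although not globally Lipschitz on $\R$, is Lipschitz with constant $N$ on the invariant domain $(-\infty,\log N)$; the almost-sure confinement of both the exact transformed process $\{X_t\}$ and its numerical counterpart $\{Y_k\}$ to this domain is exactly what makes the one-line transfer estimate legitimate and the constant $N$ uniform in $h$.
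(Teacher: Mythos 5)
Your proof is correct and follows essentially the same route as the paper's: both reduce the claim in one step to Theorem \ref{2023SIS-thm:convergence_transformed} by exploiting the good behaviour of $x \mapsto \e^{x}$ on the invariant domain $(-\infty,\log N)$. The paper phrases this via the H\"older inequality together with the exponential moment bound of Lemma \ref{2023SIS-lem:exp_moment_bounds} (and the Lyapunov inequality to pass from $p\geq 2$ to general $q\geq 1$), whereas your pathwise Lipschitz bound with constant $N$ is a slightly more direct rendering of the same fact.
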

\textbf{Proof:}  
The H{\"o}lder inequality and boundedness of 
the exponential moment together imply that for $p \geq 2$,
\begin{equation}
        \begin{aligned}
            \E\Big[ 
                \sup_{k=0,...,M}
                \big\vert e^{X_{t_k}} - e^{Y_k} \big\vert^p
            \Big] 
            \leq  
            C \E\Big[ 
                \sup_{k=0,...,M} 
                \big\vert X_{t_k} - Y_k \big\vert^p
            \Big]
            \leq 
            C h^p.
        \end{aligned}
    \end{equation}
    Thus by the Lyapunov inequality the assertion holds for any $q \geq 1$.
\qed

\section{Dynamics behavior of numerical approximations}
\label{2023SIS-section:Dynamics_behavior}

In this section we investigate the ability of the proposed
scheme to reproduce significant dynamics behavior of
the stochastic epidemic model.
More precisely, we examine the extinction and persistence properties of the time discretization and show that,
under exactly the same conditions as required in \cite{gray2011stochastic},
the proposed scheme is able to preserve the dynamics behavior of the original model for any step size $h>0$.

\subsection{Unconditional extinction preserving}
As the first {\color{green}dynamics behavior} of the stochastic model, 
we focus on the extinction property that is defined as follows.
\begin{definition}[Extinction]\label{2023SIS-def:extinction}
If the unique positive solution $I_t$, $t \geq 0$ of the stochastic SIS model tends to zero in almost sure sense as $t \rightarrow \infty$, that is, 
\begin{equation}
    \lim\limits_{t \rightarrow \infty} I_t = 0 \quad a.s.,
\end{equation}
then we say that the infected population system has the extinction property.
Similarly, the numerical approximation $\{\mathcal{I}_k\}_{k\geq 0}$
is said to have the extinction property if
\begin{equation}
    \lim\limits_{k h \rightarrow \infty} \mathcal{I}_k = 0
    \quad
    a.s..
\end{equation}
\end{definition}

The next theorem presents some sufficient conditions
to ensure the extinction property of the original stochastic SIS model \eqref{2023SIS-eq:SIS_original}, 
which is indeed quote from \cite[Theorems 4.1, 4.3]{gray2011stochastic}.
\begin{theorem}\label{2023SIS-thm:extinction_SIS_original}
Let the basic reproduction number of the stochastic SIS
model \eqref{2023SIS-eq:SIS_original} be less than one, i.e.,
\begin{align}\label{2023SIS-eq:extinction_1_R0S}
    R_0^S = \tfrac{\beta N}{\mu + \gamma} - \tfrac{\sigma^2 N^2}{2(\mu + \gamma)}
    < 1.
\end{align}
For any given initial value $I_0 \in (0,N)$, we have
the following extinction properties.
\begin{enumerate}[(i)]
    \item If $\sigma^2 \leq \frac{\beta}{N}$, then
    \begin{align}\label{2023SIS-eq:extinction_1}
        \limsup\limits_{t \rightarrow \infty}
        \frac{\log{I_t}}{t}
        \leq
        \beta N - \half \sigma^2 N^2 - \mu - \gamma 
        < 0
        \quad
        a.s..
    \end{align}
    \item If $\sigma^2 > \max \Big\{ \frac{\beta}{N}, \, \frac{\beta^2}{2(\mu + \gamma)} \Big\}$, then
    \begin{align}\label{2023SIS-eq:extinction_2}
        \limsup\limits_{t \rightarrow \infty}
        \frac{\log{I_t}}{t}
        \leq
        \tfrac{\beta^2}{2 \sigma^2} - \mu - \gamma
        < 0
        \quad
        a.s..
    \end{align} 
\end{enumerate}
\end{theorem}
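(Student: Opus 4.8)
The plan is to obtain an exact representation of $\log I_t$ via It\^o's formula and then control its pathwise growth rate. First I would apply It\^o's formula to $\log I_t$ using the SDE \eqref{2023SIS-eq:SIS_original}: since the drift is $I_t(\beta N - \mu - \gamma - \beta I_t)$ and the diffusion coefficient is $\sigma I_t(N-I_t)$, the It\^o correction term $-\half\sigma^2(N-I_t)^2$ appears and a direct computation gives
\begin{equation}
    \log I_t - \log I_0
    =
    \int_0^t \phi(I_s)\, \d s
    +
    \int_0^t \sigma (N - I_s)\, \d W_s,
\end{equation}
where $\phi(I) := \beta N - \mu - \gamma - \beta I - \half \sigma^2 (N-I)^2$ collects the drift together with the It\^o correction. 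Dividing by $t$ decomposes $\tfrac{\log I_t}{t}$ into three contributions: the initial term $\tfrac{\log I_0}{t}$, the averaged drift $\tfrac1t\int_0^t \phi(I_s)\,\d s$, and the normalized stochastic integral $\tfrac1t\int_0^t \sigma(N-I_s)\,\d W_s$.

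Next I would dispose of the two terms that do not determine the sign of the limit. The term $\tfrac{\log I_0}{t}$ trivially tends to $0$. For the martingale $M_t := \int_0^t \sigma(N-I_s)\,\d W_s$, Lemma \ref{lem:SIS_solution_property} guarantees $I_s\in(0,N)$, so its quadratic variation obeys $\langle M\rangle_t = \int_0^t \sigma^2(N-I_s)^2\,\d s \leq \sigma^2 N^2 t$, whence $\limsup_{t\to\infty}\tfrac{\langle M\rangle_t}{t}<\infty$ almost surely. The strong law of large numbers for continuous local martingales then yields $\lim_{t\to\infty}\tfrac{M_t}{t}=0$ a.s. Everything therefore reduces to bounding the averaged drift, for which it suffices to estimate $\sup_{I\in(0,N)}\phi(I)$.

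The core of the argument is a one–variable optimization of $\phi$. Substituting $u := N-I\in(0,N)$, one checks that $\phi(I) = \beta u - \half\sigma^2 u^2 - (\mu+\gamma)$, a downward parabola in $u$ with unconstrained maximizer $u^\ast = \beta/\sigma^2$; the two hypotheses correspond exactly to whether $u^\ast$ lies outside or inside the admissible interval $(0,N)$. When $\sigma^2\leq\tfrac{\beta}{N}$ we have $u^\ast\geq N$, so $\phi$ is increasing on $(0,N)$ and $\sup_I \phi(I) = \beta N - \half\sigma^2 N^2 - \mu - \gamma$, which is strictly negative precisely because $R_0^S<1$; this gives case (i). When $\sigma^2>\tfrac{\beta}{N}$ the interior maximizer is admissible and $\sup_I \phi(I) = \tfrac{\beta^2}{2\sigma^2} - \mu - \gamma$, strictly negative exactly when $\sigma^2>\tfrac{\beta^2}{2(\mu+\gamma)}$; this gives case (ii). Feeding these bounds into the averaged drift and combining with the vanishing of the other two terms produces the claimed estimates on $\limsup_{t\to\infty}\tfrac{\log I_t}{t}$, and hence also $\lim_{t\to\infty} I_t = 0$ a.s.

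I expect the only genuinely delicate point to be the justification that $M_t/t\to 0$ almost surely: one must invoke the martingale strong law rather than a naive variance bound, and some care is needed because $\langle M\rangle_t$ need not diverge linearly on every path — one splits according to whether $\langle M\rangle_\infty$ is finite (then $M_t$ converges and $M_t/t\to0$) or infinite (then $M_t/\langle M\rangle_t\to 0$ while $\langle M\rangle_t/t$ stays bounded by $\sigma^2 N^2$). Everything else, namely the It\^o computation, the quadratic optimization, and the matching of the sign conditions to $R_0^S<1$ and to the threshold $\sigma^2>\tfrac{\beta^2}{2(\mu+\gamma)}$, is routine.
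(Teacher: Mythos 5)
The paper does not actually prove this theorem: it is quoted verbatim from \cite[Theorems 4.1, 4.3]{gray2011stochastic}. Your argument is correct and is essentially the standard proof given in that reference — It\^o's formula for $\log I_t$, the martingale strong law of large numbers applied to $\int_0^t\sigma(N-I_s)\,\d W_s$ (whose quadratic variation grows at most linearly since $I_s\in(0,N)$), and the quadratic optimization of the drift in $u=N-I$, with the two cases corresponding to the maximizer $u^\ast=\beta/\sigma^2$ lying outside or inside $(0,N)$ — so nothing further is needed.
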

We mention that the assertions \eqref{2023SIS-eq:extinction_1} and \eqref{2023SIS-eq:extinction_2} 
imply that the solution $I_t$ of the stochastic SIS model tends to zero exponentially almost surely as time grows. In other words, the disease dies out with probability one,
as time tends to infinity.
In what follows, we investigate under which conditions 
the numerical approximation will admit the extinction property in the sense that
$\mathcal{I}_k$ tends to zero exponentially almost surely, which is to be stated in the next theorem.

\begin{theorem}\label{2023SIS-thm:extinction_scheme}

Let the condition \eqref{2023SIS-eq:extinction_1_R0S} hold, i.e., $R_0^S < 1$.
For any given initial value $I_0 \in (0,N)$ and 
for any step size $h>0$, 
the numerical approximations $\mathcal{I}_k$ possess
the following extinction properties.
\begin{enumerate}[(i)]
    \item If $\sigma^2 \leq \frac{\beta}{N}$, then
    \begin{align}
        \limsup\limits_{kh \rightarrow \infty}
        \frac{\log{\mathcal{I}_k}}{kh}
        \leq
        \beta N - \half \sigma^2 N^2 - \mu - \gamma 
        < 0
        \quad
        a.s..
    \end{align}
    \item If $\sigma^2 > \max \Big\{ \frac{\beta}{N}, \, \frac{\beta^2}{2(\mu + \gamma)} \Big\}$, then
    \begin{align}
        \limsup\limits_{kh \rightarrow \infty}
        \frac{\log{\mathcal{I}_k}}{kh}
        \leq
        \tfrac{\beta^2}{2 \sigma^2} - \mu - \gamma
        < 0
        \quad
        a.s..
    \end{align} 
In other words, the numerical approximations produced by \eqref{2023SIS-eq:SIS_log_corrected_Mil_scheme} tends to zero exponentially almost surely as time evolves.
\end{enumerate}

\end{theorem}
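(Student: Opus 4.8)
The plan is to turn the scheme into a pathwise upper bound for $Y_k=\log\mathcal I_k$ and then read off the growth rate from the drift $f$. First I would note that the truncation in \eqref{2023SIS-eq:SIS_log_corrected_Mil_scheme} can only decrease the value: on $\{\bar Y_{k+1}\ge \log N\}$ we replace $\bar Y_{k+1}$ by $\log N-\alpha h^{\theta}<\log N\le\bar Y_{k+1}$, so in all cases $Y_{k+1}\le\bar Y_{k+1}$. Consequently
\begin{equation}
Y_{k+1}\le Y_k+f(Y_k)h+g(Y_k)\Delta W_k+g(Y_k)g'(Y_k)\Delta\zeta_k .
\end{equation}
Telescoping this one-step inequality from $0$ to $k-1$ and dividing by $kh$ yields
\begin{equation}
\frac{Y_k}{kh}\le\frac{Y_0}{kh}+\frac1k\sum_{i=0}^{k-1}f(Y_i)+\frac{1}{kh}\sum_{i=0}^{k-1}g(Y_i)\Delta W_i+\frac{1}{kh}\sum_{i=0}^{k-1}g(Y_i)g'(Y_i)\Delta\zeta_i .
\end{equation}
Since $\tfrac{Y_0}{kh}\to0$, it suffices to bound the drift average from above and to show that the two stochastic sums, normalised by $kh$, vanish almost surely.

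For the drift I would set $y=\e^{x}$, so that \eqref{2023SIS-eq:f} becomes the downward parabola $\phi(y)=-\half\sigma^2y^2+(\sigma^2N-\beta)y+(\beta N-\mu-\gamma-\half\sigma^2N^2)$ on $(0,N)$, with vertex $y^\star=N-\tfrac{\beta}{\sigma^2}$. Because \eqref{2023SIS-Y_solution_existence} guarantees $\e^{Y_i}\in(0,N)$, each $f(Y_i)\le\sup_{y\in(0,N)}\phi(y)$, whence $\tfrac1k\sum_{i=0}^{k-1}f(Y_i)\le\sup_{y\in(0,N)}\phi(y)$. The computation of this supremum is the model-specific heart of the argument and splits exactly along the two hypotheses: if $\sigma^2\le\tfrac{\beta}{N}$ then $y^\star\le0$, $\phi$ is decreasing on $(0,N)$, and the supremum equals $\phi(0^+)=\beta N-\half\sigma^2N^2-\mu-\gamma$, which is negative by $R_0^S<1$ in \eqref{2023SIS-eq:extinction_1_R0S}; if $\sigma^2>\max\{\tfrac{\beta}{N},\tfrac{\beta^2}{2(\mu+\gamma)}\}$ then $y^\star\in(0,N)$ and $\sup\phi=\phi(y^\star)=\tfrac{\beta^2}{2\sigma^2}-\mu-\gamma$, negative precisely because $\sigma^2>\tfrac{\beta^2}{2(\mu+\gamma)}$. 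These are exactly the two rates claimed, matching Theorem \ref{2023SIS-thm:extinction_SIS_original}.

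For the two stochastic sums I would observe that they are sums of martingale differences relative to the natural filtration, since $\E[\Delta W_k\mid\mathcal F_{t_k}]=0$ and $\E[\Delta\zeta_k\mid\mathcal F_{t_k}]=0$. The coefficients are uniformly bounded on the domain using $\e^{Y_i}\in(0,N)$: $|g(Y_i)|\le\sigma N$ and $|g(Y_i)g'(Y_i)|=\sigma^2\e^{Y_i}(N-\e^{Y_i})\le\tfrac14\sigma^2N^2$; combined with $\E[(\Delta W_k)^2]=h$ and $\E[(\Delta\zeta_k)^2]=\half h^2$, this gives $\E[(g(Y_i)\Delta W_i)^2]\le\sigma^2N^2h$ and $\E[(g(Y_i)g'(Y_i)\Delta\zeta_i)^2]\le\tfrac1{32}\sigma^4N^4h^2$. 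With the normalising sequence $b_n=nh$, both series $\sum_n b_n^{-2}\E[(g(Y_{n-1})\Delta W_{n-1})^2]$ and $\sum_n b_n^{-2}\E[(g(Y_{n-1})g'(Y_{n-1})\Delta\zeta_{n-1})^2]$ are dominated by a constant times $\sum_n n^{-2}<\infty$, so the strong law of large numbers for square-integrable martingales forces both sums, divided by $kh$, to tend to $0$ almost surely. Taking $\limsup_{kh\to\infty}$ in the second display and recalling $\log\mathcal I_k=Y_k$ then delivers both assertions.

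I expect the supremum computation of $\phi$ to be the only genuinely substantive step: it is where the structure of the SIS drift enters and where the dichotomy $\sigma^2\le\beta/N$ versus $\sigma^2>\beta/N$ arises, reproducing the continuous-time rates verbatim. The martingale estimates are robust and, crucially, use no lower bound on $h$, which is exactly what renders the extinction preservation unconditional. One minor point to handle carefully is that the telescoped estimate is one-sided and therefore only controls $\limsup$ from above — but that is precisely the direction the theorem asserts.
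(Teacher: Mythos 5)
Your proposal is correct and follows essentially the same route as the paper: bound $Y_{k+1}\le\bar Y_{k+1}$, telescope, bound the drift by the supremum of $f$ on $(-\infty,\log N)$ (which splits into the same two cases with the same values $\beta N-\tfrac12\sigma^2N^2-\mu-\gamma$ and $\tfrac{\beta^2}{2\sigma^2}-\mu-\gamma$), and kill the two noise sums by a martingale law of large numbers. The only cosmetic difference is that the paper embeds the discrete sums into continuous-time It\^{o} martingales and invokes the continuous-time strong law from Mao's book, whereas you apply the discrete square-integrable martingale SLLN with normalising sequence $b_n=nh$; both are standard and equally valid here.
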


\textbf{Proof:}
    Recall first that
    \begin{align}
        \nonumber
        f(x)
        & =
        -\tfrac{1}{2}\sigma^2 \e^{2x}
        +
        (\sigma^2 N - \beta) \e^{x}
        +
        (\beta N-\mu-\gamma-\tfrac{1}{2}\sigma^2N^2),
        \quad
        x \in (-\infty, \log N).
    \end{align}
    In the first case that $\sigma^2 \leq \tfrac{\beta}{N}$, clearly
    $f(x) \leq \beta N-\mu-\gamma-\frac{1}{2}\sigma^2N^2 < 0$.
    In the second case that
    $\sigma^2 > \max \Big\{ \frac{\beta}{N}, \, \frac{\beta^2}{2(\mu + \gamma)} \Big\}$,  
    $f(x)$ reaches its maximum value 
    $ \tfrac{\beta^2}{2 \sigma^2} -\mu - \gamma < 0 $
    when 
    $ e^x = \tfrac{\sigma^2 N - \beta}{\sigma^2}$.
    By denoting
    \begin{align}
        f_{max}^{\sigma}
        : =
        \begin{cases}
            \beta N-\mu-\gamma-\frac{1}{2}\sigma^2N^2,
            \quad 
            \sigma^2 \leq \tfrac{\beta}{N};
            \\
            \tfrac{\beta^2}{2 \sigma^2} -\mu - \gamma,
            \quad
            \sigma^2 > \max \Big\{ \frac{\beta}{N}, \, \frac{\beta^2}{2(\mu + \gamma)} \Big\},
        \end{cases}
    \end{align}
in any case we have
    \begin{align}
        \label{2023SIS-eq:extinction_1_f_bounds}
        f(x)
        \leq 
        f_{max}^{\sigma}
        < 0.
    \end{align}
    Recall that
{\color{black}
\begin{equation*}
\left\{ 
    \begin{array}{l}
        \bar {Y}_{k+1}
        =
        Y_k
        +
        f(Y_k) h 
        +
        g(Y_k) \Delta W_k
        +
        g(Y_k) g'(Y_k) \Delta \zeta_k,\\
        {Y}_{k+1}
        = 
        \bar {Y}_{k+1}
        \mathbbm{1}_{{\color{red} \{} \bar {Y}_{k+1} < \log N {\color{red} \}}}   
        +
        (
            \log N - \alpha h^{\theta}
        )
        \mathbbm{1}_{\{ \bar {Y}_{k+1} \geq \log N \}}
        .
    \end{array} \right.
\end{equation*} 
}
    It follows from \eqref{2023SIS-eq:extinction_1_f_bounds} and by induction that
    \begin{align}
        \nonumber
        \bar Y_{k+1}
        & \leq 
        Y_k
        +
        h f_{max}^{\sigma}
        +
        g(Y_k) \Delta W_k
        +
        g(Y_k) g'(Y_k) \Delta \zeta_k\\
        & \leq 
        Y_0
        +
        (k+1) h f_{max}^{\sigma}
        +
        \sum_{i=0}^{k}
        g(Y_i) \Delta W_i
        +
        \sum_{i=0}^{k}
        g(Y_i) g'(Y_i) \Delta \zeta_i,
    \end{align}
    and thus
    \begin{align}
        Y_{k+1}
        \leq
        \bar Y_{k+1}
        \leq
        Y_0
        +
        (k+1) h f_{max}^{\sigma}
        +
        \sum_{i=0}^{k}
        g(Y_i) \Delta W_i
        +
        \sum_{i=0}^{k}
        g(Y_i) g'(Y_i) \Delta \zeta_i.     
    \end{align}
    Therefore,
    \begin{align}
    \label{eq:extinction_Y_induction}
        \nonumber
        \limsup\limits_{k h \rightarrow \infty}
        \frac{Y_{k}}{k h}
        & \leq
        f_{max}^{\sigma}
        +
        \limsup\limits_{k h \rightarrow \infty}
        \frac{1}{k h}
        \sum_{i=0}^{k-1}
        g(Y_i) \Delta W_i
        +
        \limsup\limits_{k h \rightarrow \infty}
        \frac{1}{k h}
        \sum_{i=0}^{k-1}
        g(Y_i) g'(Y_i) \Delta \zeta_i
        \quad
        a.s..
    \end{align}
    Further, note that 
    $$
    \Xi^{(3)}_t
        : =
            \int_0^t g(Y_{\kappa(s)}) {\rm d}W_s,\ 
    \Xi^{(4)}_t
        : =
            \int_0^t g'(Y_{\kappa(s)}) g(Y_{\kappa(s)})(W_s-W_{\kappa(s)}) {\rm d}W_s
    $$
    are both square-integrable continuous time martingales.
    By the large number theorem (see e.g., \cite[Theorem 3.4.]{mao2007stochastic}) for martingales we have
    $$
    \lim\limits_{t \rightarrow \infty} 
    \frac{ \Xi^{(3)}_t}{t} 
    =
    \lim\limits_{t \rightarrow \infty} 
    \frac{ \Xi^{(4)}_t}{t}=0,\ a.s.,
    $$
    which yields that 
    
    \begin{align}
        \limsup\limits_{k h \rightarrow \infty}
        \frac{1}{k h}
        \sum_{i=0}^{k-1}
        g(Y_i) \Delta W_i
        =
        \limsup\limits_{k h \rightarrow \infty}
        \frac{1}{k h}
        \sum_{i=0}^{k-1}
        g(Y_i) g'(Y_i) \Delta \zeta_i
        =
        0
        \quad
        a.s..
    \end{align}
    Therefore
    \begin{align}
        \limsup\limits_{k h \rightarrow \infty}
        \frac{Y_{k}}{k h}
        \leq
        f_{max}^{\sigma}
        < 0
        \quad
        a.s..
    \end{align}
    Noting that 
    $
    \log \mathcal{I}_k 
    = 
    Y_k,
    $
    we finally arrive at
    \begin{align}
        \nonumber
        \limsup\limits_{k h \rightarrow \infty}
        \frac{\log \mathcal{I}_k}{k h}
        & =
        \limsup\limits_{k h \rightarrow \infty}
        \frac{ Y_k }{k h}
        \leq
        f_{max}^{\sigma}
        < 0
        \quad
        a.s..
    \end{align}
    The proof is thus completed.
\qed

Theorem \ref{2023SIS-thm:extinction_scheme} shows that the numerical approximation produced by the LCM scheme is able to reproduce the extinction property of the original model for any step size $h>0$.

\subsection{Unconditional persistence preserving}
In this subsection, let us turn to 
the persistence property of the model, 
defined as follows.
\begin{definition}[Persistence]\label{2023SIS-def:persistence}
If the unique positive solution $I_t$, $t \geq 0$ of the stochastic SIS model obeys
\begin{equation}
    \limsup\limits_{t \rightarrow \infty} I_t \geq \lambda \quad a.s.
    \quad
    \text{and}
    \quad
    \liminf\limits_{t \rightarrow \infty} I_t \leq \lambda \quad a.s.,
\end{equation}
where $\lambda$ is a positive constant,
namely, $I_t$ will rise to or above $\lambda$ infinitely often with probability one,
we say that the infected population system has the persistence property.
Similarly, the numerical approximation $\{\mathcal{I}_k\}_{k \geq 0}$ is said to have the persistence property if \begin{equation}
    \limsup\limits_{k h \rightarrow \infty} \mathcal{I}_k \geq \lambda \quad a.s.
    \quad
    \text{and}
    \quad
    \liminf\limits_{k h \rightarrow \infty} \mathcal{I}_k \leq \lambda \quad a.s..
\end{equation}
\end{definition}
{\color{black}
We mention that {\color{green}$\limsup\limits_{t\rightarrow  \infty} I_t$} $\geq  \lambda>0 $ implies that $I_t$ will rise to or above $\lambda$ infinitely often, which means the disease will persist and never die out. 
The other assertion 
     {\color{green}$\liminf\limits_{t\rightarrow  \infty} I_t$} $\leq  \lambda $,
     i.e., $I_t$ will be below $\lambda$ infinitely often,
    then provides more precise information about the range of the "Persistence".
}
We are now in a position to  provide some sufficient conditions to ensure the persistence property of the
original stochastic SIS mode 
\eqref{2023SIS-eq:SIS_original},  which have been
established in \cite[Theorem 5.1]{gray2011stochastic}.
\begin{theorem}\label{2023SIS-thm:persistence_original}
    If
    \begin{align}\label{2023SIS-eq:persistence_condition}
        R_0^S
        =
        \tfrac{\beta N}{\mu + \gamma} - \tfrac{\sigma^2 N^2}{2(\mu + \gamma)}
        > 1,
    \end{align}
    then for any given initial value $I_0 \in (0,N)$, the solution of the stochastic SIS model \eqref{2023SIS-eq:SIS_original} obeys
    \begin{equation}
        \limsup\limits_{t \rightarrow \infty} I_t \geq \lambda \quad a.s. 
        \quad
        \text{and}
        \quad
        \liminf\limits_{t \rightarrow \infty} I_t \leq \lambda \quad a.s.,
    \end{equation}
    where $\lambda$ is the unique root in $(0,N)$ of
    \begin{align}
        \beta N  - \mu - \gamma - \beta \lambda 
        - 
        \half \sigma^2 ( N - \lambda )^2
        = 0.
    \end{align}
    In other words, $I_t$ will rise to or above $\lambda$ infinitely often with probability one.
\end{theorem}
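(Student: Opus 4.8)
The plan is to reduce the statement to a sign analysis of the deterministic drift seen by $\log I_t$. First I would introduce the function
\begin{equation*}
F(x) := \beta N - \mu - \gamma - \beta x - \half \sigma^2 (N-x)^2, \quad x \in [0,N],
\end{equation*}
which is precisely the expression whose root defines $\lambda$. Writing $F(x) = -\half\sigma^2 x^2 + (\sigma^2 N - \beta)x + F(0)$ shows that $F$ is a downward parabola in $x$, with $F(0) = (\mu+\gamma)(R_0^S - 1) > 0$ under the hypothesis $R_0^S > 1$ and $F(N) = -(\mu+\gamma) < 0$. Since the product of its two roots equals $-2F(0)/\sigma^2 < 0$, the roots have opposite signs, so there is exactly one positive root; because $F(N)<0$ this root lies in $(0,N)$. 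A concavity-and-intermediate-value argument then gives that $\lambda$ is the unique root in $(0,N)$, with $F(x) > 0$ for $x \in (0,\lambda)$ and $F(x) < 0$ for $x \in (\lambda, N)$. This establishes the well-posedness of $\lambda$ and isolates the two regimes that drive the two assertions.

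Next I would apply It\^o's formula to $\log I_t$, which is legitimate since $I_t \in (0,N)$ by Lemma \ref{lem:SIS_solution_property}, to obtain
\begin{equation*}
\log I_t = \log I_0 + \int_0^t F(I_s)\,\d s + \int_0^t \sigma(N - I_s)\,\d W_s, \quad t \geq 0.
\end{equation*}
The martingale term $M_t := \int_0^t \sigma(N-I_s)\,\d W_s$ has quadratic variation bounded by $\sigma^2 N^2 t$, because $0 < I_s < N$; hence the strong law of large numbers for martingales (e.g.\ \cite[Theorem 3.4]{mao2007stochastic}) yields $\lim_{t\to\infty} M_t/t = 0$ almost surely. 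Dividing the displayed identity by $t$ therefore reduces everything to the long-time average of $F(I_s)$.

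The two assertions would then follow by \emph{reductio ad absurdum}. For $\limsup_{t\to\infty} I_t \geq \lambda$, suppose that on a set of positive probability $\limsup_{t\to\infty} I_t < \lambda$; then for each such $\omega$ there are $\varepsilon, T_\omega > 0$ with $I_t \leq \lambda - \varepsilon$ for all $t \geq T_\omega$, so that $F(I_t) \geq \delta := \min_{x \in [0,\lambda-\varepsilon]} F(x) > 0$ eventually. Dividing the identity by $t$ and letting $t\to\infty$ gives $\liminf_{t\to\infty} \tfrac{\log I_t}{t} \geq \delta > 0$, which contradicts $\log I_t \leq \log N$. For $\liminf_{t\to\infty} I_t \leq \lambda$, suppose that on a set of positive probability $\liminf_{t\to\infty} I_t > \lambda$; then eventually $I_t \geq \lambda + \varepsilon$, whence $F(I_t) \leq -\delta' < 0$, so $\limsup_{t\to\infty} \tfrac{\log I_t}{t} \leq -\delta' < 0$ and therefore $\log I_t \to -\infty$, i.e.\ $I_t \to 0$, contradicting $I_t \geq \lambda + \varepsilon$.

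I expect the main obstacle to be the careful pathwise bookkeeping in these contradiction arguments: one must pass from the almost-sure martingale law of large numbers, valid off a single null set, to conclusions that hold on the hypothetical positive-probability event, and one must check that the reduction "$I_t$ eventually bounded away from $\lambda$" produces a genuinely uniform strictly positive (respectively negative) bound $\delta$ on $F(I_t)$ that survives the $t\to\infty$ limit. The sign analysis of $F$ and the It\^o computation are routine; the delicate point is combining them so that the linear growth of the drift term provably overwhelms the sublinear martingale fluctuation on the relevant event.
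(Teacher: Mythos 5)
Your proposal is correct. A point worth flagging: the paper does not actually prove Theorem~\ref{2023SIS-thm:persistence_original} — it quotes it verbatim from \cite[Theorem 5.1]{gray2011stochastic} — so there is no in-paper proof to compare against. That said, your argument is exactly the one the paper deploys for the discrete analogue, Theorem~\ref{2023SIS-thm:persistence_scheme}: pass to $\log I_t$ (equivalently the transformed drift $f$ of \eqref{2023SIS-eq:SIS_transformed}, which satisfies $F(\e^x)=f(x)$), locate the unique root of the drift via the downward-parabola sign analysis, kill the stochastic integral by the martingale strong law of large numbers, and derive each inequality by contradiction from the resulting linear drift in $t$. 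The one piece of bookkeeping you identified as the "main obstacle" is handled in the paper's discrete proof in the standard way and works identically here: if $\mathbb{P}(\limsup_{t\to\infty} I_t<\lambda)>0$, countable additivity over $\varepsilon=1/n$ yields a single $\varepsilon>0$ with $\mathbb{P}(\Omega_1)>0$ for $\Omega_1:=\{\limsup_{t\to\infty} I_t\le\lambda-2\varepsilon\}$, on which $F(I_t)\ge\min\{F(0),F(\lambda-\varepsilon)\}>0$ eventually; intersecting $\Omega_1$ with the full-measure set where $M_t/t\to 0$ gives the contradiction pathwise. The same device handles the $\liminf$ assertion, using that the vertex of $F$ lies to the left of $\lambda$ so that $F\le F(\lambda+\varepsilon)<0$ on $[\lambda+\varepsilon,N)$. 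No gaps.
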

Next, let us investigate the ability of the numerical approximation produced by the LCM scheme to preserve 
the persistence property in the sense that
$\mathcal{I}_k$ rise to or above a positive constant infinitely often with probability one.
\begin{theorem}\label{2023SIS-thm:persistence_scheme}
Let the condition \eqref{2023SIS-eq:persistence_condition} hold, i.e., $R_0^S > 1$.
For any step size $h >0$, if we  chose a proper constant $\alpha\in(0,1]$ in the scheme \eqref{2023SIS-eq:SIS_log_corrected_Mil_scheme} satisfying
    \begin{equation} \label{eq:thm-persistence-alpha-condition}
      \alpha 
        < 
      h^{-\theta}\log \Big(
            \tfrac{\sigma^2N}
                {\sigma^2N-\beta+\sqrt{\beta^2-2\sigma^2(\mu+\gamma)}}
      \Big),
  \end{equation}
then for any given initial value $I_0 \in (0,N)$ the numerical approximation obeys:
    \begin{equation}\label{2023SIS-eq:persistence_I}
        \limsup\limits_{k h \rightarrow \infty} \mathcal{I}_k 
        \geq 
        \lambda \quad a.s. 
        \quad
        \text{and}
        \quad
        \liminf\limits_{k h \rightarrow \infty} \mathcal{I}_k 
        \leq
        \lambda \quad a.s.,
    \end{equation}
    where $\lambda$ is the unique root of
    \begin{align}
        \beta N - \mu - \gamma - \beta  \lambda - \half \sigma^2 ( N - \lambda )^2 = 0.
    \end{align}
    In other words, the numerical approximations produced by \eqref{2023SIS-eq:SIS_log_corrected_Mil_scheme} will rise to or above $\lambda$ infinitely often with probability one.
\end{theorem}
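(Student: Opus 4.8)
The plan is to argue each of the two assertions by contradiction, translating every statement about $\mathcal{I}_k=\e^{Y_k}$ into a statement about the sign of the one-step drift $f(Y_k)$. The natural first step is to rewrite the drift \eqref{2023SIS-eq:f} in the original variable: with $I=\e^{x}$ one checks that
\begin{equation*}
    f(x)=\beta N-\mu-\gamma-\beta\e^{x}-\tfrac12\sigma^2(N-\e^x)^2=:\phi(\e^x),
\end{equation*}
so that $\phi$ is a downward parabola in $I$ with $\phi(0)=(\mu+\gamma)(R_0^S-1)>0$ under \eqref{2023SIS-eq:persistence_condition}. Consequently its two real roots obey $I_-<0<I_+$, the positive one being exactly $\lambda=I_+=\tfrac{\sigma^2N-\beta+\sqrt{\beta^2-2\sigma^2(\mu+\gamma)}}{\sigma^2}$, and $\phi>0$ on $(0,\lambda)$, $\phi<0$ on $(\lambda,N)$. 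A short computation then shows that the hypothesis \eqref{eq:thm-persistence-alpha-condition} is equivalent to $\log N-\alpha h^{\theta}>\log\lambda$; in words, the value $N\e^{-\alpha h^{\theta}}$ at which the scheme lands whenever it hits the upper boundary lies strictly above $\lambda$. This observation is the heart of the argument.

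For the assertion $\limsup_{kh\to\infty}\mathcal{I}_k\ge\lambda$, I would suppose for contradiction that $\{\limsup_{kh\to\infty}\mathcal{I}_k<\lambda\}$ has positive probability and, decomposing it into the countable union over $m$ of $\{\limsup_{kh\to\infty}\mathcal{I}_k<\lambda-\tfrac1m\}$, fix one such event $\Omega_m$ of positive probability. On $\Omega_m$ there is a random index $K$ with $\mathcal{I}_k\le\lambda-\tfrac1m$, i.e.\ $Y_k\le\log(\lambda-\tfrac1m)$, for all $k\ge K$, whence $f(Y_k)=\phi(\mathcal{I}_k)\ge\delta_m>0$ there by the sign analysis above. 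The crucial point is that, since $\log N-\alpha h^{\theta}>\log\lambda>\log(\lambda-\tfrac1m)$, truncation cannot occur for $k\ge K$ on $\Omega_m$: a truncation at step $k$ would force $Y_{k+1}=\log N-\alpha h^{\theta}>\log\lambda$, contradicting $Y_{k+1}\le\log(\lambda-\tfrac1m)$. Hence $Y_{k+1}=\bar Y_{k+1}$ for $k\ge K$, and summing \eqref{2023SIS-eq:SIS_log_corrected_Mil_scheme} gives
\begin{equation*}
    Y_{k}\ge Y_{K}+\delta_m(k-K)h+\sum_{i=K}^{k-1}g(Y_i)\Delta W_i+\sum_{i=K}^{k-1}g(Y_i)g'(Y_i)\Delta\zeta_i .
\end{equation*}
Dividing by $kh$ and invoking the strong law of large numbers for the continuous martingales $\Xi^{(3)}_t,\Xi^{(4)}_t$ exactly as in the proof of Theorem \ref{2023SIS-thm:extinction_scheme}, both sums divided by $kh$ tend to $0$ a.s., so $\liminf_{kh\to\infty}\tfrac{Y_k}{kh}\ge\delta_m>0$ and $Y_k\to+\infty$, contradicting $Y_k\le\log(\lambda-\tfrac1m)$.

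For the assertion $\liminf_{kh\to\infty}\mathcal{I}_k\le\lambda$, I would run the symmetric contradiction: on a positive-probability event where $\mathcal{I}_k\ge\lambda+\tfrac1m$ eventually, one has $f(Y_k)=\phi(\mathcal{I}_k)\le-\delta_m<0$. Here the truncation helps rather than hinders, because one needs only the upper bound $Y_{k+1}\le\bar Y_{k+1}$, which holds identically (when truncation occurs, $Y_{k+1}=\log N-\alpha h^{\theta}<\log N\le\bar Y_{k+1}$). Summing the scheme and dividing by $kh$ then yields $\limsup_{kh\to\infty}\tfrac{Y_k}{kh}\le-\delta_m<0$ and thus $Y_k\to-\infty$, i.e.\ $\mathcal{I}_k\to0$, contradicting $\mathcal{I}_k\ge\lambda+\tfrac1m$.

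The routine ingredients—the parabola and sign analysis of $\phi$, the boundedness $g(Y_k)=\sigma(N-\mathcal{I}_k)\in(0,\sigma N)$ used for the martingale estimates, and the martingale strong law—are either elementary or already available from the extinction proof. The main obstacle, and the genuinely new point, is the treatment of the truncation in the $\limsup$ half: one must recognize that the precise choice of $\alpha$ in \eqref{eq:thm-persistence-alpha-condition} places the truncation level above $\log\lambda$, so that the contradiction hypothesis itself rules out truncation and restores the clean identity $Y_{k+1}=\bar Y_{k+1}$ on which the lower drift estimate depends.
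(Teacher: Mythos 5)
Your proposal is correct and follows essentially the same route as the paper: a sign analysis of the drift $f$ (the paper does it via $f'$ and boundary limits, you via the parabola $\phi(\e^x)$, which yield the same conclusions including that the condition on $\alpha$ forces the truncation level $\log N-\alpha h^{\theta}$ to lie above $\log\lambda$), followed by two contradiction arguments in which the hypothesis rules out truncation in the $\limsup$ half, the one-sided bound $Y_{k+1}\le\bar Y_{k+1}$ suffices in the $\liminf$ half, and the martingale strong law kills the noise terms. The only differences are presentational (your countable decomposition over margins $1/m$ versus the paper's fixed $\epsilon$ extracted from the negated a.s.\ statement).
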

\textbf{Proof:}
    By the assumption \eqref{2023SIS-eq:persistence_condition}, there exists a positive constant $\delta$ such that 
    $
    \delta < R_0^S - 1.
    $
    Recall that
    \begin{equation*}
        f(x)
        =
        -\tfrac{1}{2}\sigma^2 \e^{2x}
        +
        (\sigma^2 N - \beta) \e^{x}
        +
        (\beta N-\mu-\gamma-\tfrac{1}{2}\sigma^2N^2).
    \end{equation*}
    Clearly we have
    \begin{align}
        \label{eq:extinction_F_-infty}
        \lim\limits_{x \rightarrow - \infty}
        f(x)
        & =
        \beta N-\mu-\gamma-\tfrac{1}{2}\sigma^2N^2
        >
        \delta (\mu + \gamma)> 0,\\
        \lim\limits_{x \rightarrow (\log N - \alpha h^{\theta})^-}
        f(x)
        & =
        -\tfrac12 \sigma^2 N^2 \big( 1 - \e^{- \alpha h^{\theta}} \big)^2
        +
        \beta N \big( 1 - \e^{- \alpha h^{\theta}} \big)
        -
        \mu - \gamma.
    \end{align}
    It is easy to derive that 
 \begin{equation}
    \lim\limits_{x \rightarrow (\log N - \alpha h^{\theta})^-}
    f(x)< 0
 \end{equation}
holds true,  provided
 \begin{equation} 
    1-\e^{-\alpha h^{\theta}} < 
    \tfrac{\beta-\sqrt{\beta^2-2\sigma^2(\mu+\gamma)}}{\sigma^2N}.
 \end{equation}
This can be fulfilled under the assumption 
\eqref{eq:thm-persistence-alpha-condition}, that is,
  \begin{equation*}
      \alpha 
        < 
      h^{-\theta}\log \Big(
            \tfrac{\sigma^2N}
                {\sigma^2N-\beta+\sqrt{\beta^2-2\sigma^2(\mu+\gamma)}}
      \Big).
  \end{equation*}
  Noting
  \begin{equation*}
      f'(x)
      =
      -\sigma^2 \e^{2 x}
      +
      (\sigma^2 N - \beta) \e^x,
  \end{equation*}
  one can readily infer that $f'(x) < 0$ 
  and $f(x)$ is decreasing in the case 
  $\sigma^2 N - \beta \leq 0$.
  In the other case $\sigma^2 N - \beta >0$,
  \begin{itemize}
      \item $f(x)$ is strictly increasing as $x < \log(\tfrac{\sigma^2 N - \beta}{\sigma^2}) $,
      \item $f(x)$ is strictly decreasing as $x > \log(\tfrac{\sigma^2 N - \beta}{\sigma^2})$.
  \end{itemize}
    Thus the equation $f(x)=0$ has a unique root, denoted as $\eta$, satisfying $\e^\eta=\lambda$ and $\eta < \log N - \alpha h^{\theta}$.
    
    Now we validate the two assertions in \eqref{2023SIS-eq:persistence_I} in two steps. 
    If the first assertion of \eqref{2023SIS-eq:persistence_I} is not true, then there exists a positive constant $\epsilon$ such that
    \begin{align}
        P(\Omega_1) > \epsilon,
    \end{align}
    where $\Omega_1 := \{ \omega : \limsup\limits_{k h \rightarrow \infty} Y_k(\omega) \leq \eta - 2 \epsilon \}$.
    Hence $\forall \omega \in \Omega_1$, there exists a $m = m(\omega) > 0$ such that
    \begin{align}\label{eq:persistence_contradiction_1}
        \bar Y_k(\omega) = Y_k(\omega) 
        \leq 
        \eta - \epsilon 
        < 
        \log N - \alpha h^{\theta}
    \end{align}
    whenever $k > m$.
    We can choose $\epsilon$ sufficiently small such that $0 < f(\eta - \epsilon)< \delta (\mu + \gamma)$, which, together with \eqref{eq:extinction_F_-infty}, implies that
    \begin{align}\label{2023SIS-eq:persistence_f_bound}
        f(Y_k(\omega)) 
        \geq
        f(\eta - \epsilon),
    \end{align}
    whenever $k > m$.
    Moreover, by the large number theorem of martingales, there is an $\Omega_2 \subset \Omega$ with $P(\Omega_2) = 1$ such that $\forall \omega \in \Omega_2$
    \begin{align}\label{eq:LNT_persistence}
        \lim\limits_{k h \rightarrow \infty}
        \frac{1}{k h}
        \sum_{i=0}^{k-1}
        g(Y_i(\omega)) \Delta W_i(\omega)
        +
        \lim\limits_{k h \rightarrow \infty}
        \frac{1}{k h}
        \sum_{i=0}^{k-1}
        g(Y_i(\omega))g'(Y_i(\omega)) \Delta \zeta_i(\omega)
        =
        0.
    \end{align}
    Fix any $\omega \in \Omega_1 \cap \Omega_2$ and let $k>m+1$. 
    Then it follows from \eqref{2023SIS-eq:persistence_f_bound} and by induction that
    \begin{align}
        \nonumber
        \bar Y_{k+1}(\omega)
        & =
        \bar Y_k(\omega)
        +
        f(Y_k(\omega)) h 
        +
        g(Y_k(\omega)) \Delta W_k(\omega)
        +
        g(Y_k(\omega)) g'(Y_k(\omega)) \Delta \zeta_k(\omega)\\
        \nonumber
            & = Y_{m+1}(\omega)
                +
                \sum_{i=m+1}^{k} f(Y_i(\omega)) h\\
        \nonumber
        & \quad
                +
                \sum_{i=m+1}^{k}
                \Big[
                g(Y_i(\omega)) \Delta W_i(\omega)
                    +
                    g(Y_i(\omega))g'(Y_i(\omega)) \Delta \zeta_i(\omega)
                \Big]\\
        \nonumber
        & \geq
        Y_{m+1}(\omega)
        +
        (k - m) f(\eta - \epsilon) h\\
        & \quad +
        \sum_{i=m+1}^{k}
        \Big[
        g(Y_i(\omega)) \Delta W_i(\omega)
        +
        g(Y_i(\omega))g'(Y_i(\omega)) \Delta \zeta_i(\omega)
        \Big].
    \end{align}
    Thus
    \begin{align}
        \liminf\limits_{k h \rightarrow \infty}
        \frac{\bar Y_k(\omega)}{k h}
        \geq
        f(\eta - \epsilon) > 0,
    \end{align}
    {\color{black}
    and consequently
    \begin{align}
        \lim\limits_{k h \rightarrow \infty}
        \bar Y_k(\omega) = + \infty.
    \end{align}
    which contradicts \eqref{eq:persistence_contradiction_1}.
    }
    Thus
    \begin{align}
        \limsup\limits_{k h \rightarrow \infty} Y_k \geq \eta \quad a.s. .
    \end{align}
%
    Further, if the second assertion of \eqref{2023SIS-eq:persistence_I} is not true, then there exists a positive constant $\epsilon$ such that
    \begin{align}
        P(\Omega_3) > \epsilon,\
    \eta + 2 \epsilon < \log N,
    \end{align}
    where $\Omega_3 := \{ \omega : \liminf\limits_{k h \rightarrow \infty} Y_k(\omega) \geq \eta + 2 \epsilon \}$.
    Hence $\forall \omega \in \Omega_3$, there exists a $m = m(\omega) > 0$ such that
    \begin{align}\label{eq:persistence_contradiction_2}
        Y_k(\omega) \geq \eta + \epsilon
    \end{align}
    whenever $k > m$.
    Simple analysis on $f(x)$ implies that
    \begin{align}\label{eq:F_omega_leq}
        f(Y_k(\omega)) \leq f(\eta + \epsilon)
        <0
    \end{align}
    whenever $k > m$.
    Fix any $\omega \in \Omega_3 \cap \Omega_2$. 
    Then it follows from \eqref{eq:F_omega_leq} and by induction that
    \begin{align}
        \nonumber
        \bar Y_{k+1}(\omega)
        & =
        Y_k(\omega)
        +
        f(Y_k(\omega)) h 
        +
        g(Y_k(\omega)) \Delta W_k(\omega)
        +
        g(Y_k(\omega)) g'(Y_k(\omega)) \Delta \zeta_k(\omega)\\
        \nonumber
        & \leq
        Y_0(\omega)
        +
        \sum_{i=0}^{m}
        f(Y_i(\omega)) h
        +
        (k - m) f(\eta + \epsilon) h\\
        & \quad +
        \sum_{i=0}^{k}
        \Big[
        g(Y_i(\omega)) \Delta W_i(\omega)
        +
        g(Y_i(\omega))g'(Y_i(\omega)) \Delta \zeta_i(\omega)
        \Big].
    \end{align}
    Thus by \eqref{eq:LNT_persistence} we have
    \begin{align}
        \limsup\limits_{k h \rightarrow \infty}
        \frac{\bar Y_k(\omega)}{k h}
        \leq
        f(\eta + \epsilon) < 0,
    \end{align}
    and consequently
    \begin{align}
        \lim\limits_{k h \rightarrow \infty}
        \bar Y_k(\omega) = -\infty.
    \end{align}
    {\color{black}
    Therefore we have
    \begin{align}
        \lim\limits_{k h \rightarrow \infty}
        Y_k(\omega) 
        \leq 
        \lim\limits_{k h \rightarrow \infty}
        \bar Y_k(\omega) 
        =
        -\infty,
    \end{align}
    which contradicts \eqref{eq:persistence_contradiction_2}. 
    Thus 
    \begin{align}
        \liminf\limits_{k h \rightarrow \infty} Y_k \leq \eta \quad a.s..
    \end{align}
    The proof of assertion 
    \eqref{2023SIS-eq:persistence_I} 
    is therefore completed.
    }
\qed

\begin{remark}
Theorem \ref{2023SIS-thm:persistence_scheme} illustrates that, when the scheme parameters $\alpha, \theta$ obey 
the condition \eqref{eq:thm-persistence-alpha-condition},
the numerical approximation produced by the LCM scheme with any step size $h>0$ is able to reproduce the persistence property of the original model without any addition restriction on the model parameters. It is worthwhile to point out that, the condition \eqref{eq:thm-persistence-alpha-condition} can be easily fulfilled,
by taking $\alpha $ to be sufficiently small.
\end{remark}

\section{Numerical experiments}
\label{2023SIS-section:numerical_experiments}

In this section, we provide numerical experiments to illustrate the previous theoretical findings.
Consider the stochastic SIS epidemic model
\begin{equation*}
    \d I_t
    =
    I_t (\beta N - \mu - \gamma - \beta I_t)\, \d t
    +
    \sigma I_t (N - I_t) \, \d W_t,
    \quad
    0 \leq t \leq T.
\end{equation*}
The population sizes are measured in units of 1 billion and the unit of time is assumed to be one day throughout this section.

The approximation errors will be calculated in terms of 
$$
\bigg( \E \Big[ 
    \sup\limits_{k=0,...,M} \big\vert I_{t_k} - \mathcal{I}_k \big\vert^2
\Big] 
\bigg)^{\frac{1}{2}}.
$$
The newly proposed LCM scheme \eqref{2023SIS-eq:SIS_log_corrected_Mil_scheme},
and the {\color{black}Lamperti truncated Euler-Maruyama (LTEM)} scheme proposed in {\color{black}\cite{yang2023strong}}
{\color{black}with the scheme parameter $K = (1 + e^{Y_0}) + 50$ 
}
will be 
tested for comparison.
{\color{red}Note that the specific value of the scheme parameter $K$ was not explicitly stated in \cite{yang2023strong}. In the numerical experiments, we choose $K = 1 + e^{Y_0} + 50$ such that $K > 1 + e^{Y_0}$, which was required by assumptions in \cite{yang2023strong}}.
Moreover, we use numerical approximations {\color{black}produced by LTEM, which has been proved to convergent with order one,} with the step size $h_{exact} = 2^{-14}$ to identify the "exact" solutions
and those with step sizes $h = 2^{-i}, i=4,5,6,7,8$ for numerical schemes. The expectations appearing in the errors are approximated by calculating averages over $10000$ paths.
The following two sets of parameters, with fixed $N = 10$, are taken for tests.

\begin{example}\label{2023SIS-eg:convergence_1}
    $T = 1, \beta = 0.5, \sigma = 0.2, \mu + \gamma = 4, \alpha = 0.1, \theta = 2$, {\color{red}$I_0 = 1$}.
\end{example}

\begin{example}\label{2023SIS-eg:convergence_2}
    $T = 1, \beta = 0.7, \sigma = 0.1, \mu + \gamma = 2, \alpha = 1, \theta = 3$ {\color{red}$I_0 = 9$}.
\end{example}

\begin{figure}[htp]
    \centering
    \begin{minipage}{0.48\linewidth}
        \centering
        \includegraphics[width = 1\linewidth]{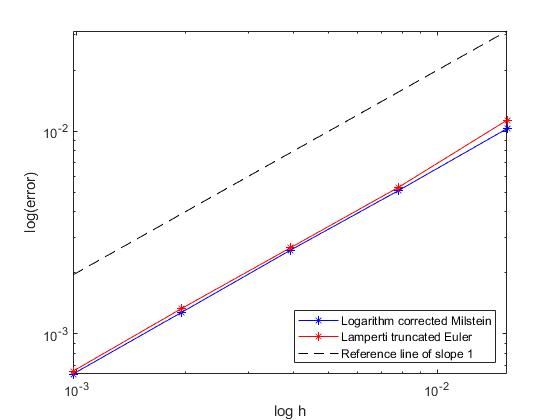}
        \caption{\centering Example \ref{2023SIS-eg:convergence_1} }
        \label{2023SIS-fig.1}
    \end{minipage}
    \begin{minipage}{0.48\linewidth}
        \centering
        \includegraphics[width = 1\linewidth]{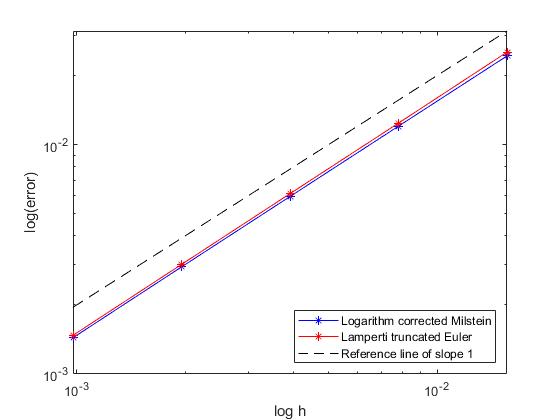}
        \caption{\centering Example \ref{2023SIS-eg:convergence_2} }
        \label{2023SIS-fig.2}
    \end{minipage}
\end{figure}

\begin{table}[htp]
    \footnotesize    
    \caption{\centering{\bf  Sample average errors}}
    \label{2023SIS-tab:error}
    \begin{tabular}
        {p{1cm}<{\centering} p{1.5cm}<{\centering}  p{1.5cm}<{\centering} p{1.5cm}<{\centering} p{1.5cm}<{\centering} p{1.5cm}<{\centering} p{1.5cm}<{\centering}}
        \toprule   & $h$ & $2^{-10}$ & $2^{-9}$ & $2^{-8}$ & $2^{-7}$ & $2^{-6}$\\
        \midrule 
        \multirow{2}*{ Ex \ref{2023SIS-eg:convergence_1}} 
        & {\color{black}LCM} & {\color{black}0.0006} & {\color{red}0.0013} & {\color{red}0.0026} & {\color{red}0.0051} & {\color{red}0.0103}\\
        & {\color{black}LTEM} & {\color{black}0.0007} & {\color{red}0.0013} & {\color{red}0.0027} & {\color{red}0.0053} & {\color{red}0.0113}\\
        \midrule  
        \multirow{2}*{Ex \ref{2023SIS-eg:convergence_2}} 
        & {\color{black}LCM} & {\color{red}0.0014} & {\color{red}0.0029} & {\color{red}0.0059} & {\color{red}0.0120} & {\color{red}0.0243}\\
        & {\color{black}LTEM} & {\color{black}0.0015} & {\color{red}0.0030} & {\color{red}0.0061} & {\color{red}0.0124} & {\color{red}0.0253}\\
    \bottomrule
\end{tabular}
\end{table}

\begin{table}[htp]
    \footnotesize
    \caption{ \centering{ \bf Least-squares fit for the convergence rate $q$}}
    \label{2023SIS-tab:convergence_rate}
    \centering
    \begin{tabular}
    {m{2cm}<{\centering} m{4cm}<{\centering} m{4cm}<{\centering} }
    \toprule  & {\color{black}LCM} scheme & {\color{black}LTEM} scheme \\
    \midrule 
    Ex \ref{2023SIS-eg:convergence_1} & {\color{red}$q = 1.0047$, resid = $0.0120$}& {\color{red}$q = 1.0223$, resid = 0.0417}\\
    Ex \ref{2023SIS-eg:convergence_2} & {\color{red}$q = 1.0198$, resid = 0.0032}& {\color{red}$q = 1.0245$, resid = 0.0016}\\
    \bottomrule
\end{tabular}
\end{table}

Figs \ref{2023SIS-fig.1} and \ref{2023SIS-fig.2} show the average sample errors against various step sizes on a log-log scale. Detailed data is presented in Table \ref{2023SIS-tab:error}.
The approximation errors of the LCM and {\color{black}LTEM} 
scheme are plotted in the blue and red 
solid lines. The black dashed lines are reference lines of slope 1.
From the figures we can easily identify convergence rates of order 1 for both the LCM and {\color{black}LTEM} scheme.
Moreover, by assuming that $error \approx C h^q$ such that $\log (error) \approx \log C + q \log h$, the convergence rate $q$ and the least square residual is obtained with a least-squares fitting as presented in Table \ref{2023SIS-tab:convergence_rate}.
These results confirm the expected convergence rate.
    
Concerning the dynamics behavior of the approximations, we test three different schemes for comparison: 
the newly proposed LCM scheme,
the {\color{black}LTEM} scheme and 
{\color{black}the usual Milstein scheme \cite{mil1975approximate}}
directly applied to the original SDE without any transformation.
The following two sets of model parameters, carefully chosen to meet all conditions in Theorems \ref{2023SIS-thm:extinction_scheme} or \ref{2023SIS-thm:persistence_scheme}, with fixed $\alpha = 0.1$ and $\theta = 2$, are taken as examples.

\begin{example}\label{2023SIS-eg:extinction_2}
    {\color{black}$\beta = 0.42, \sigma = 0.9, \mu + \gamma = 10, I_0=90, N=100$}.
\end{example}

\begin{example}\label{2023SIS-eg:persistence_2}
    ${\color{black}\beta = 0.6, \sigma = 0.01, \mu + \gamma = 40, I_0 = 10, N = 100}$.
\end{example}

We mention that parameters in Example 
\ref{2023SIS-eg:extinction_2} satisfy the conditions of extinction in Theorem \ref{2023SIS-thm:extinction_SIS_original} and \ref{2023SIS-thm:extinction_scheme}, 
while those in Example 
\ref{2023SIS-eg:persistence_2} meet the requirements of persistence in Theorem \ref{2023SIS-thm:persistence_original} and \ref{2023SIS-thm:persistence_scheme}.
Moreover, we can conclude by simple calculations that
\vspace{-5pt}
\begin{equation*}
    \liminf\limits_{k h \rightarrow \infty} \mathcal{I}_k 
    \leq 
    {\color{black}32.9588}
    \leq
    \limsup\limits_{k h \rightarrow \infty} \mathcal{I}_k
    \quad
    a.s.
\end{equation*}
\vspace{-6pt}
when the Example \ref{2023SIS-eg:persistence_2} is under consideration.

\FloatBarrier

\begin{figure}[htp]
    \centering
        \includegraphics[width = 0.6\linewidth]{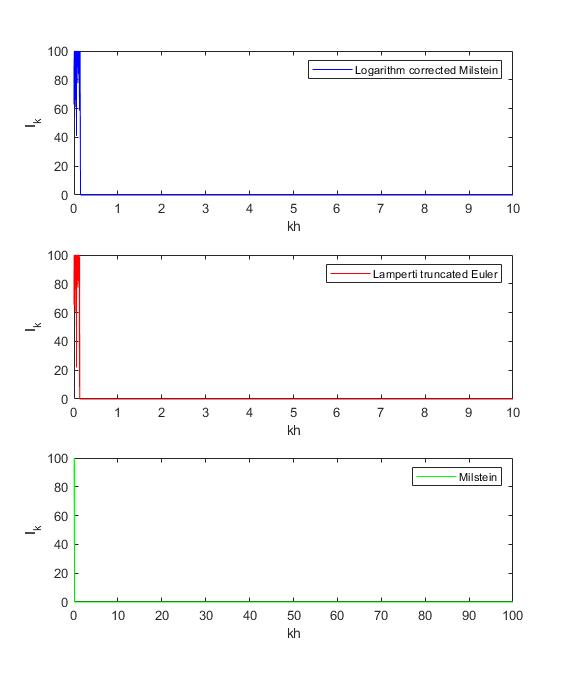}
        \caption{\centering Example \ref{2023SIS-eg:extinction_2}: {\color{black}$h_{\text{exact}} = 2^{-14}$} }
        \label{2023SIS-fig:extinction_2_exact}
\end{figure}

\FloatBarrier

\begin{figure}[htp]
    \centering
    \begin{minipage}{0.48\linewidth}
        \includegraphics[width = 1\linewidth]{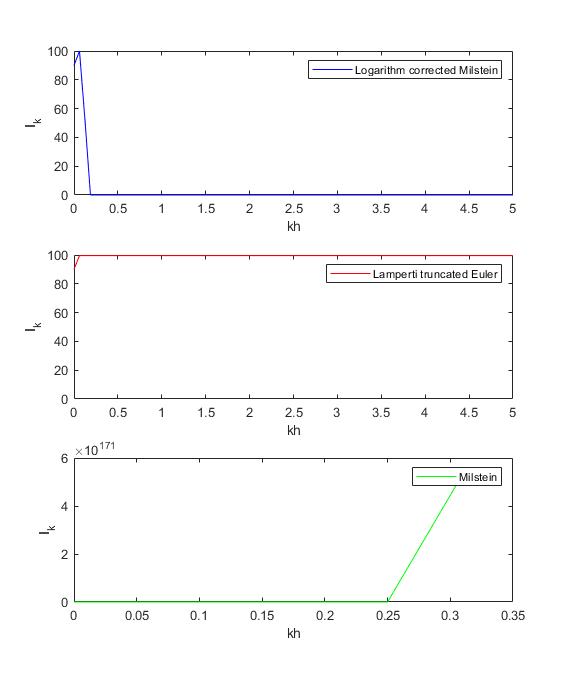}
    \caption{\centering{Example \ref{2023SIS-eg:extinction_2}: {\color{black}$h = 2^{-4}$} }}
    \label{2023SIS-fig:extinction_2_4}
    \end{minipage} 
    \begin{minipage}{0.48\linewidth}
        \includegraphics[width = 1\linewidth]{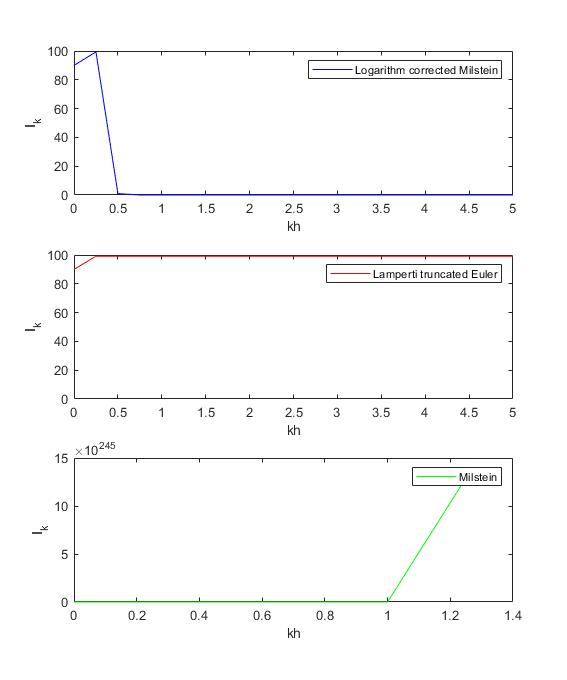}
        \caption{\centering{Example \ref{2023SIS-eg:extinction_2}: {\color{black}$h = 2^{-2}$} }}
    \label{2023SIS-fig:extinction_2_2}
    \end{minipage}
\end{figure}

\begin{figure}[htp]
    \centering
        \includegraphics[width = 0.6\linewidth]{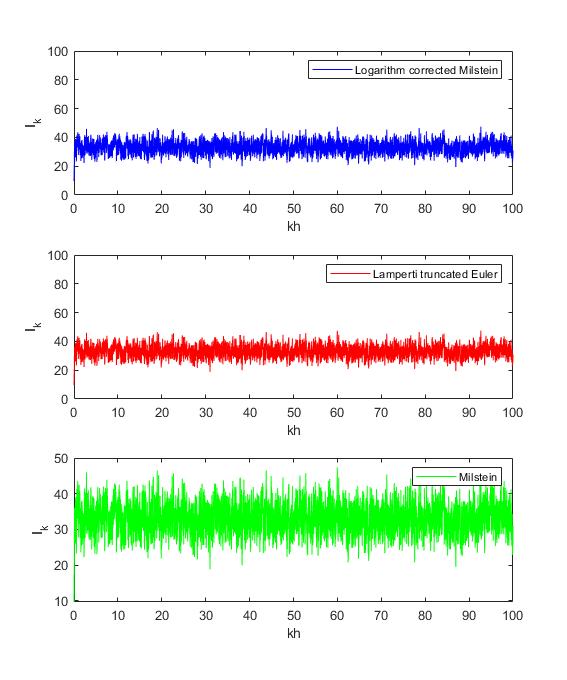}
        \caption{ \centering{ Example \ref{2023SIS-eg:persistence_2}: $h_{\text{exact}}=2^{-14}$  }}
        \label{2023SIS-fig:persistence_2_exact}
\end{figure}

\begin{figure}[htp]
    \centering
    \begin{minipage}{0.48\linewidth}
        \includegraphics[width = 1\linewidth]{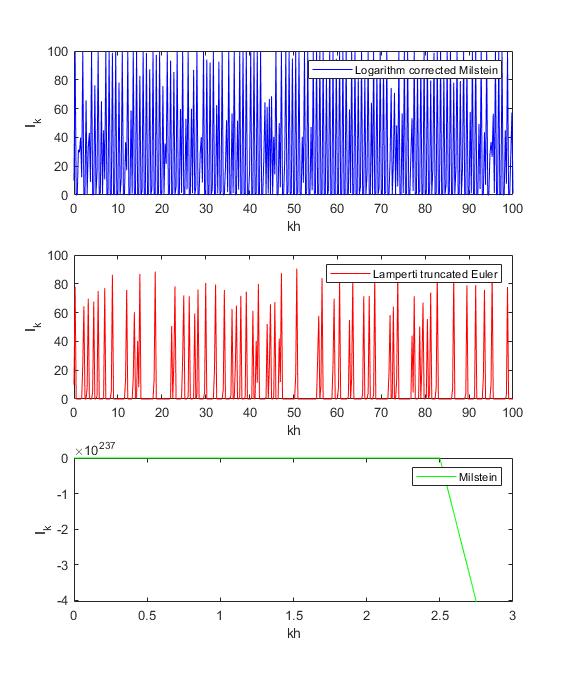}
        \caption{\centering{Example \ref{2023SIS-eg:persistence_2}: $h=2^{-2}$ }}
    \label{2023SIS-fig:persistence_2_2}
    \end{minipage}
    \begin{minipage}{0.48\linewidth}
        \includegraphics[width = 1\linewidth]{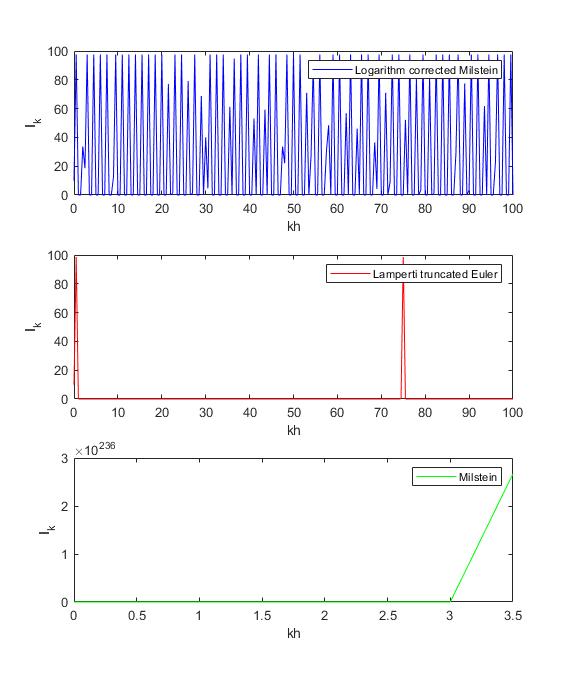}
        \caption{\centering{Example \ref{2023SIS-eg:persistence_2}: $h=2^{-1}$ }}
    \label{2023SIS-fig:persistence_2_1}
    \end{minipage}
\end{figure}

\FloatBarrier

\begin{table}[htp]
    \footnotesize  
    \color{red}
    \caption{\centering{\bf \color{red} The average percentage of truncation over $2 \times 10^4$ paths}}
    \label{2023SIS-tab:P_Y>logN}
    \begin{tabular}
        {p{1cm}<{\centering} p{1.5cm}<{\centering}  p{1.5cm}<{\centering} p{1.5cm}<{\centering} p{1.5cm}<{\centering} p{1.5cm}<{\centering} p{1.5cm}<{\centering}}
        \toprule  & $h$ & $2^{-5}$ & $2^{-4}$ & $2^{-3}$ & $2^{-2}$ & $2^{-1}$\\
        \midrule 
        \multirow{3}*{Set (1)} & $I_0 = 10$ & $0$ & $0$ & $0$ & $0$ & $0$ \\
        & $I_0 = 50$ & $0.0072\%$ & $0.0081\%$ & $0.0129\%$ & $0.0508\%$ & $0.1199\%$ \\
        & $I_0 = 90$ & $0.0488\%$ & $0.0578\%$ & $0.0977\%$ & $0.3927\%$ & $0.9408\%$ \\
        \midrule
        \multirow{3}*{Set (2)} & $I_0 = 10$ & $0$ & $5.5191\%$ & $49.9986\%$ & $50\%$ & $50\%$ \\
        & $I_0 = 50$ & $0$ & $5.5430\%$ & $49.9985\%$ & $50\%$ & $50\%$ \\
        & $I_0 = 90$ & $0$ & $5.4993\%$ & $49.9986\%$ & $50\%$ & $50\%$ \\
        \midrule
        \multirow{3}*{Set (3)} & $I_0 = 10$ & $0$ & $0$ & $0.5397\%$ & $19.5684\%$ & $25.0898\%$ \\
        & $I_0 = 50$ & $0$ & $0$ & $0.5382\%$ & $19.5228\%$ & $24.9660\%$ \\
        & $I_0 = 90$ & $0$ & $0$ & $0.5379\%$ & $19.4598\%$ & $24.8422\%$ \\
        \midrule
        \multirow{3}* {Set (4)} & $I_0 = 10$ & $0.0007\%$ & $0.0015\%$ & $0.0009\%$ & $0.0006\%$ & $<0.0001\%$ \\
        & $I_0 = 50$ & $0.0006\%$ & $0.0011\%$ & $0.0011\%$ & $0.0008\%$ & $0.0004\%$ \\
        & $I_0 = 90$ & $0.0006\%$ & $0.0013\%$ & $0.0015\%$ & $0.0003\%$ & $<0.0001\%$ \\
    \bottomrule
\end{tabular}
\end{table}

\FloatBarrier

{\color{red}We conduct numerical experiments on one-path simulations for the three different schemes with various step sizes $h=2^{-i},i=0,1,...,5,14$.
In Figures \ref{2023SIS-fig:extinction_2_exact}-\ref{2023SIS-fig:persistence_2_1}, we only show simulations with some chosen stepsizes.}
{\color{red}More accurately, Figure \ref{2023SIS-fig:extinction_2_exact} presents numerical approximations to the SDE produced by three different schemes with a very fine step size $h_{\text{exact}} = 2^{-14}$, which reveal the exact behavior of the exact solution.
This allows us to tell which scheme is most accurate and reliable when used to approximate the {\color{green}dynamics behavior} of the model with larger and usual step sizes later.}
{\color{red}Our numerical results show, LCM exactly preserves the extinction property with six different step sizes, as opposed to explosion of LTEM  when $h \geq 2^{-4}$, which can be also observed from Figures \ref{2023SIS-fig:extinction_2_4}-\ref{2023SIS-fig:extinction_2_2} for $h = 2^{-4}$ and $h = 2^{-2}$.
}
For Example \ref{2023SIS-eg:persistence_2}, Figure \ref{2023SIS-fig:persistence_2_exact} provides a glance at the {\color{green}dynamics behavior} of the "exact" solutions.
{\color{red}From Figures \ref{2023SIS-fig:persistence_2_2}-\ref{2023SIS-fig:persistence_2_1} one can observe that}
the newly proposed LCM scheme reveals a persistent property for all step sizes.
Instead, the LTEM scheme 
for Example \ref{2023SIS-eg:persistence_2}
can only preserve the persistence property for sufficiently small step sizes {\color{red}$h \leq 2^{-2}$} 
but fails with
step sizes {\color{red}$ h \geq 2^{-1}$}.
The Milstein scheme even explodes for all chosen step sizes {\color{red}except for $h = 2^{-14}$}.
These results confirm the excellent dynamics-preserving properties of the LCM scheme we construct.

{\color{red}
Next, we are concerned with 
the truncation frequency of the LCM method, which would lead to a bias and is expected to be heavily dependent on the parameters and step sizes. 
To this end, we fix $\alpha = 0.1$, $\theta = 2$ and take four sets of parameters as follows:
\begin{enumerate}[(1)]
    \item $(R_0^S = -400.8)$ $\beta = 0.42$,  $\mu + \gamma = 10$, $\sigma = 0.9$,
    \item $(R_0^S = 4.15)$ $\beta = 0.42$,  $\mu + \gamma = 10$, $\sigma = 0.01$,
    \item $(R_0^S = 1.4875)$ $\beta = 0.6$,  $\mu + \gamma = 40$, $\sigma = 0.01$,
    \item $(R_0^S = 0.25)$ $\beta = 0.6$,  $\mu + \gamma = 40$, $\sigma = 0.1$,
\end{enumerate}
with different initial values $I_0 = 10, 50, 90$ and five different step sizes $h = 2^{-i}, i = 1,2,...,5$.
We run $2 \times 10^4$ trajectories and list the average percentage of truncation over $2 \times 10^4$ paths
in Table \ref{2023SIS-tab:P_Y>logN}.
Numerical results indicate that the truncation frequency is extremely low for small step sizes ($h = 2^{-4}, 2^{-5}$). 
Furthermore, the truncation frequency for the extinction cases (1) and (4) is significantly lower  than that for the persistence cases (2) and (3). 
For both cases, the truncation frequency tends to zero, as the step size $h$ shrinks.

}

\section{Conclusion}\label{2023SIS-section:conclusion}

In this paper we propose a first-order strongly convergent scheme for the stochastic SIS epidemic model which preserves the domain as well as the dynamics behavior of the model unconditionally.
{\color{black}
The easily implementable scheme relies on 
a logarithm transformation combined with a corrected explicit Milstein-type method. 
The strong convergence rate of the scheme is carefully analyzed and proved to be order $1$. 
Moreover, the proposed scheme is able to reproduce the dynamics behavior, namely, the extinction and persistence properties of the 
original model without additional requirements on the model parameters and the step size $h>0$ .
Numerical experiments are provided to verify the convergence analysis and comparisons of dynamics behavior between different schemes are presented to show the advantage of the proposed scheme.
Before closing the conclusion section, we mention that higher-order schemes which are also able to reproduce the dynamic properties of the considered model, as studied by \cite{LIU2023107258}, are on the top of the list of our future works.



}

\backmatter

\bmhead{Acknowledgments}
The authors thank the associated editor and anonymous reviewers for the helpful comments and suggestions.

\section*{Declarations}

\subsection*{Funding}
This work is supported by Natural Science Foundation of China (12471394, 12071488, 11971488) and the Fundamental Research Funds for the Central Universities of Central South University (Grant No. 2023zzts0348).

\subsection*{Conflict of interest/Competing interests}
No potential conflict of interest was reported by the authors.

\subsection*{Ethical Approval} 
Not applicable

\subsection*{Consent to participate}
Not applicable

\subsection*{Consent for publication}
All authors have approved the manuscript for its submission and publication.

\subsection*{Availability of data and materials}
The authors confirm that the data and  materials supporting the findings of this study are available within the article.

\subsection*{Code availability }
Code will be made available on request.

\subsection*{Authors' contributions} 
Ruishu Liu:  Formal analysis, Visualization, Writing - Original Draft, Software. Xiaojie Wang: Conceptualization, Methodology, Writing - Review \& Editing, Funding acquisition, Supervision. Lei Dai: Methodology, Validation, Investigation, Formal analysis.

\bibliography{SIS_Reference}

\end{document}